\def\ps@pprintTitle{%
  \let\@oddhead\@empty
  \let\@evenhead\@empty
  \let\@oddfoot\@empty
  \let\@evenfoot\@oddfoot
}
\begin{document}

\begin{frontmatter}
  \title{
  Computing effective diffusivity of chaotic and stochastic flows using structure preserving schemes}


   \author[hku]{Zhongjian Wang}
   \ead{ariswang@connect.hku.hk}
   \author[uci]{Jack Xin}
   \ead{jxin@math.uci.edu}
   \author[hku]{Zhiwen Zhang\corref{cor1}}
   \ead{zhangzw@hku.hk}

   \address[hku]{Department of Mathematics, The University of Hong Kong, Pokfulam Road, Hong Kong SAR.}
   \address[uci]{Department of Mathematics, University of California at Irvine, Irvine, CA 92697, USA.}

   \cortext[cor1]{Corresponding author}

\begin{abstract}
In this paper we study the problem of computing the effective diffusivity for a particle moving in chaotic and stochastic flows.
In addition we numerically investigate the residual diffusion phenomenon in chaotic advection. The residual diffusion refers to the non-zero effective (homogenized) diffusion in the limit of zero molecular diffusion as a result of chaotic mixing of the streamlines. In this limit traditional numerical methods typically fail since the solutions of the advection-diffusion equation develop sharp gradients. Instead of solving the Fokker-Planck equation in the Eulerian formulation, we compute the motion of particles in the Lagrangian formulation, which is modelled by stochastic differential equations (SDEs).  We propose a new numerical integrator based on a stochastic splitting method to solve the corresponding SDEs in which the deterministic subproblem is symplectic preserving while the random subproblem can be viewed as a perturbation. We provide rigorous error analysis for the new numerical integrator using the backward error analysis technique and show that our method outperforms
standard Euler-based integrators.  Numerical results are presented to demonstrate the accuracy and efficiency of the proposed method for several typical chaotic and stochastic flow problems of physical interests. \\
\textit{\textbf{AMS subject classification:}} 76R99, 35B27, 65T40, 65M70
\end{abstract}

\begin{keyword}
Advection-diffusion; chaotic flows; stochastic flows; effective diffusivity;  structure preserving schemes;
stochastic Hamiltonian systems; backward error analysis.
\end{keyword}
\end{frontmatter}


\section{Introduction} \label{sec:introduction}
\noindent
Diffusion enhancement in fluid advection is a fundamental problem to characterize and quantify the large-scale effective diffusion
in fluid flows containing complex and turbulent streamlines, which is of great theoretical and practical importance, see  \cite{Fannjiang:94,Fannjiang:97,Biferale:95,Majda:99,
PavliotisStuart:03,PavliotisStuart:05,PavliotisStuart:07,StuartZygalakis:09,JackXin:11,JackXin:15} and references therein. Its applications can be found in many physical and engineering sciences, including atmosphere/ocean science, chemical engineering, and combustion. In this paper, we shall study a passive tracer model, which describes particle
motion with zero inertia:
\begin{align}
\dot{X}(t) = v(X,t) + \sigma \xi(t),  \quad  X\in R^{d},   \label{passivemodel}
\end{align}
where $X$ is the particle position, $\sigma\geq0$ is the molecular diffusion coefficient, and $\xi(t)\in R^d$ is a white noise or colored noise. The velocity $v(x,t)$ satisfies either the Euler or the Navier-Stokes equation. We point out that in practice, $v(x,t)$ can be modeled by a random field which mimics energy spectra of the velocity fields. We set $v(x,t) = \nabla^{\bot} \phi(x,t)$ and
the streamline function $\phi$ satisfies $\phi_{t} = A\phi + \sqrt{Q}\zeta(x,t)$, which is a random field generated by appropriately choosing operators $A$ and $Q$ and $\zeta(x,t)$ is a space-time white noise independent of $\xi(t)$. This will be investigated in our subsequent paper.

For spatial-temporal periodic velocity fields and random velocity fields with short-range correlations,
the homogenization theory \cite{Bensoussan:78,Garnier:97,Oleinik:94,Stuart:08} says that the long-time large-scale behavior of the particles is governed by a Brownian motion. More precisely, let $D^{E}\in R^{d\times d}$ denote the effective diffusivity matrix and
$X^{\epsilon}(t)\equiv\epsilon X(t/\epsilon^2)$. Then, $X^{\epsilon}(t)$ converges in distribution to a Brownian motion $W(t)$ with covariance matrix $D^{E}$, i.e., $X^{\epsilon}(t) \xrightarrow{\text{d}}\sqrt{2D^E}W(t)$. The $D^E$ can be expressed in terms of particle ensemble average (Lagrangian framework) or cell problems (Eulerian framework). The dependence of $D^E$ on the velocity field of the problem is highly nontrivial. For time-independent Taylor-Green velocity field, the authors of \cite{StuartZygalakis:09} proposed a stochastic splitting method and calculated the effective diffusivity in the limit of vanishing molecular diffusion. For random velocity fields with long-range correlations, various forms of anomalous diffusion, such as super-diffusion and sub-diffusion, can be obtained for some exactly solvable models (see \cite{Majda:99} for a review). However, long-time large-scale behavior is in general difficult analytically.

This motivates us to study numerically the dependence of $D^E$ on complicated incompressible, time-dependent velocity fields in this paper. We are also interested in investigating the existence of residual diffusivity for the passive tracer model Eq.\eqref{passivemodel} for several different velocity fields. The residual diffusivity refers to the non-zero effective diffusivity in the limit of zero molecular diffusion as a result of a fully chaotic mixing of the streamlines. It is expected that the corresponding long-time large-scale behavior will follow a different law and sensitively depend on the velocity fields. In \cite{JackXinLyu:2017}, the authors solved computed the cell problem of the advection-diffusion type and observed the residual diffusion phenomenon. This approach allows adaptive basis learning for parameterized flows. However, the solutions of the advection-diffusion equation develop sharp gradients as molecular diffusion approaches zero and demand a large amount of computational costs in standard Fourier basis. To overcome this difficulty, we shall adopt the Lagrangian framework and compute
an ensemble of particles governed by Eq.\eqref{passivemodel} directly.

In this paper, we shall compute the effective diffusivity of stochastic flows using structure preserving schemes and investigate the existence of residual diffusivity for several prototype velocity fields. First, we propose a new numerical integrator based on a stochastic splitting method to solve the corresponding SDEs in which the deterministic subproblem is symplectic preserving while the random subproblem can be viewed as a perturbation.
Then using the backward error analysis (BEA)\cite{Reich:99}, we prove that our numerical integrator preserve the invariant measure on torus space (the original space moduled by its space-time period), while the standard Euler-based integrator does not have this property. Thus, our method is capable of computing long-time behaviors of the passive tracer model.

The rest of the paper is organized as follows. In Section 2, we give a brief introduction of the background of the passive tracer model
and derivation of the effective diffusivity tensor using multiscale technique. In section 3, we propose our new method for computing the
passive tracer model. Error estimate of the proposed method will be discussed in Section 4. We use the BEA technique and find that for a class of separable Hamiltonian our method preserves the structure and achieves an asymptotically convergence to effective diffusivity. Issues regarding the practical implementation of our method will also be discussed. In Section 5, we present numerical results to demonstrate the accuracy and efficiency of our method. We also investigate the existence of residual diffusivity for time periodic and stochastic velocity fields. Concluding remarks are made in Section 6.

\section{Effective diffusivity and multiscale technique} \label{sec:EffectiveDiffusivity}
\noindent
We first give a brief introduction of the effective diffusivity for stochastic flows. The motion of a particle
in a velocity field can be described by the following stochastic differential equation,
\begin{align}
\dot{X}(t) = v(X,t) + \sigma \xi(t),  \quad  X\in R^{d},   \label{eqn:generalSDE}
\end{align}
where $\sigma$ is the molecular diffusion, $X$ is the position of the particle, $v(X,t)$ is the Eulerian
velocity field at position $X$ and time $t$, $\xi(t)$ is a Gaussian white noise with zero mean and correlation function
$<\xi_{i}(t)\xi_{j}(t^{\prime})>=\delta_{ij}\delta(t-t^{\prime})$. Here $\langle\cdot\rangle$ denotes ensemble average over all randomness.

Given any initial density $u_{0}(x)$, particle $X(t)$ of Eq.\eqref{eqn:generalSDE} has a density $u(x,t)$ which is given by the Fokker-Planck equation,
\begin{align}
u_t + \nabla\cdot (vu )  = D_0 \Delta u, \quad u(x,0)= u_{0}(x), \quad  x\in R^{d},  \label{FokkerPlanck-eq}
\end{align}
where $D_0=\sigma^2/2$ is the diffusion coefficient. When $v(x,t)$ is incompressible (i.e. $\nabla_x v(x,t)=0\quad \forall t$), deterministic and space-time \emph{periodic} in $O(1)$ scale (for convenience we assume the period of $v$ in space  is $1$ and in time is $T_{per}$),
the formula for the effective diffusivity tensor is \cite{Bensoussan:78,Biferale:95}
\begin{align}
D_{ij}^{E} =D_0\big(\delta_{ij}+\langle \nabla w_i \cdot \nabla w_j \rangle_{p}\big),
\label{Def_EffectiveDiffusivity_Euler}
\end{align}
where $w(x,t)\in \mathbb{T}^{d}\times[0,T_{per}]$ is the periodic solution of the cell problem
\begin{align}
w_t + v \cdot \nabla w - D_0\triangle w = -v, \label{CellProblem_EffectiveDiffusivity}
\end{align}
and $\langle \cdot \rangle_{p} $ denotes space-time average over periods.
As $v$ is incompressible, solution $w(x,t)$ of the cell problem Eq.\eqref{CellProblem_EffectiveDiffusivity} is unique up to an additive constant by the Fredholm
alternative. The correction to $D_0$ is positive definite in Eq.\eqref{Def_EffectiveDiffusivity_Euler}. In practice,
the cell problem Eq.\eqref{CellProblem_EffectiveDiffusivity} can be solved using numerical methods,
such as spectral methods. In \cite{JackXinLyu:2017}, a small set of adaptive basis functions were constructed from
fully resolved spectral solutions to reduce the computation cost. However, when $D_0$ becomes extremely small,
the solutions of the advection-diffusion equation Eq.\eqref{CellProblem_EffectiveDiffusivity} develop sharp gradients and demand a large number of Fourier modes to resolve, which makes the Eulerian framework computationally expensive and unstable.

In this paper, we shall investigate the Lagrangian approach to compute the effective diffusivity tensor, which is defined by (equivalent to Eq.\eqref{Def_EffectiveDiffusivity_Euler} via homogenisation theory)
\begin{align}
D_{ij}^{E}=\lim_{t\rightarrow\infty}\frac{\Big\langle\big(x_i(t)-x_i(0))(x_j(t)-x_j(0)\big)\Big\rangle_r}{2t},
\quad 1\leq i,j \leq d,
\label{Def_EffectiveDiffusivity_Lagrangian}
\end{align}
where $X(t)=(x_1(t),...,x_d(t))^{T}$ is the position of a particle tracer at time $t$ and the average $\langle\cdot\rangle_r$ is taken over an ensemble of test particles.
If the above limit exists, that means the transport of the particle is a standard diffusion process, at least on a long-time scale.
This is the typical situation, i.e., the spreading of the particle $\Big\langle\big(x_i(t)-x_i(0))(x_j(t)-x_j(0)\big)\Big\rangle_r$ grows linearly with respect to time $t$, for example when the velocity field is given by the Taylor-Green velocity field \cite{StuartZygalakis:09}. However, there are also cases showing that the spreading of particles does not grow linearly with time but has a power law $t^{\gamma}$, where $\gamma>1$ and $\gamma<1$ correspond to super-diffusive and sub-diffusive behaviors, respectively \cite{Biferale:95,Majda:99}.

The major difficulties in solving Eq.\eqref{eqn:generalSDE} come from two components: (1) the computational time should be long enough to approach the diffusive time scale, and (2) the chaotic and stochastic velocity may increase the dimension of the solution space. To address these issues, we shall develop robust numerical integrators, which are structure-preserving and accurate for long-time integration. In addition, we shall investigate the relationship between several typical  time-dependent velocity fields $v(x,t)$ (including both chaotic and stochastic flows) and the corresponding effective diffusivity in this paper.

\section{New stochastic integrators} \label{sec:NewIntegrators}
\noindent
In this section, we construct the new stochastic integrators for the passive tracer model, which is based on the
operator splitting methods \cite{strang:68,Quispel:02}.  We consider the following \emph{two-dimensional} model problems to illustrate the main idea and emphasize that our method can be used to solve high-dimensional problems without any difficulty,
\begin{align}\label{eqn:HamiltonianSDE}
\begin{cases}
dx_1=v_1(t,x_1,x_2)dt+\sigma_1 dW_1, \quad x_1(0)=x_{10}, \\
dx_2=v_2(t,x_1,x_2)dt+\sigma_2 dW_2, \quad x_2(0)=x_{20}.
\end{cases}
\end{align}
Furthermore, we assume that there exists a Hamiltonian function $H(t,x_1,x_2)$ such that
\begin{align}\label{HamiltonianFunction}
v_1(t,x_1,x_2) = -\frac{\partial H(t,x_1,x_2)}{\partial x_2}, \quad
v_2(t,x_1,x_2) =  \frac{\partial H(t,x_1,x_2)}{\partial x_1}.
\end{align}
In this paper we assume that the Hamiltonian $H(t,x_1,x_2)$ is sufficiently smooth and that first order
derivatives of $v_i(t,x_1,x_2)$, $i=1,2$ are bounded. These conditions are necessary to guarantee the
existence and uniqueness of solutions of Eq.\eqref{eqn:HamiltonianSDE}, see \cite{Oksendal:13}. Moreover, the
boundedness of some higher order derivatives of $v_i(t,x_1,x_2)$ is required when we prove the convergence analysis in Section \ref{sec:ConvergenceAnalysis}.

We first rewrite the particle tracer model Eq.\eqref{eqn:HamiltonianSDE} into an abstract form $\dot{X}=\mathcal{L}X$, where
$X=(x_1,x_2)^{T}$. We then split the operator $\mathcal{L}$ into two operators $\mathcal{L}_{i}$, $i=1,2$, where
\begin{align}
\mathcal{L}_{1}:  & \quad dx_1=v_1(t,x_1,x_2)dt, \quad dx_2=v_2(t,x_1,x_2)dt,  \label{OpSp_deterministic} \\
\mathcal{L}_{2}:  & \quad dx_1=\sigma_1 dW_1,  \quad \quad \quad dx_2=\sigma_2 dW_2, \label{OpSp_stochastic}
\end{align}
corresponding to the deterministic part and the stochastic part, respectively. Finally, we apply composition methods to approximate the integrator $\varphi(\tau)=exp(\tau(\mathcal{L}_{1}+\mathcal{L}_{2}))$ generated from Eq.\eqref{eqn:HamiltonianSDE}. Though the operator splitting methods have been successfully applied to various problems, there is limited work on solving SDEs and SPDEs. We refer to \cite{Milstein:02,Owhadi:10} for recent works on Hamiltonian systems with additive noise.

We approximate the integrator $\varphi(\tau)$ by the Lie-Trotter splitting method and get
\begin{align}
\varphi(\tau)=exp(\tau(\mathcal{L}_{1}+\mathcal{L}_{2}))\approx
exp(\tau\mathcal{L}_{1})exp(\tau\mathcal{L}_{2}). \label{Lie-Trotter}
\end{align}
Now we discuss how to discretize the numerical integrator Eq.\eqref{Lie-Trotter}. From time $t=t_k$ to time
$t=t_{k+1}$, where $t_{k+1}=t_{k}+\tau$, $t_0=0$, assuming the solution $(x_1^{k},x_2^{k})^{T}\equiv(x_1(t_k),x_2(t_k))^{T}$
is given, one can solve the subproblems corresponding to $\mathcal{L}_{1}$ and $\mathcal{L}_{2}$ in a small time step $\tau$ to obtain
$(x_1^{k+1},x_2^{k+1})^{T}$. In our numerical method, we discretize the operator $\mathcal{L}_{1}$ by numerical schemes that preserve symplectic structure and the operator $\mathcal{L}_{2}$ by the Milstein scheme \cite{Oksendal:13}, so we obtain the new stochastic integrators for
Eq.\eqref{eqn:HamiltonianSDE} as follows,
\begin{align}\label{NewStochasticIntegrators_Scheme_L1}
\begin{cases}
x_1^{*} = x_1^{k} + \tau v_1\big(t_k+\beta\tau,\alpha x_1^{*}+(1-\alpha)x_1^{k},(1-\alpha)x_2^{*}+\alpha x_2^{k}\big),  \\
x_2^{*} = x_2^{k} + \tau v_2\big(t_k+\beta\tau,\alpha x_1^{*}+(1-\alpha)x_1^{k},(1-\alpha)x_2^{*}+\alpha x_2^{k}\big),
\end{cases}
\end{align}
where the parameters $\alpha,\beta \in [0,1]$ and
\begin{align}\label{NewStochasticIntegrators_Scheme_L2}
\begin{cases}
x_1^{k+1} = x_1^{*} +\sigma_1 \Delta_{k}W_1(\tau),  \\
x_2^{k+1} = x_2^{*} +\sigma_2 \Delta_{k}W_2(\tau),
\end{cases}
\end{align}
with $\Delta_{k}W_i(\tau)=W_i(t_k+\tau)-W_i(t_k)$, $i=1,2$. In practice, each $\Delta_{k}W_i(\tau)$ is an independent random
variable of the form $\sqrt{\tau}\mathcal{N}(0,1)$.

The symplectic-preserving schemes Eq.\eqref{NewStochasticIntegrators_Scheme_L1} are implicit in general.
Compared with explicit schemes, however, they allow us to choose a relatively large time step to compute. In practice,
we find that few steps of Newton iterations are enough to maintain accurate results. Therefore, the computational cost is
controllable. To design adaptive time-stepping method for Eq.\eqref{eqn:HamiltonianSDE} is an interesting issue, which will be studied in our future work.

In general, the second-order Strang splitting \cite{strang:68} is more frequently adopted in application, for which the
integrator $\varphi(\tau)$ is approximated by
\begin{align}
\varphi(\tau)=exp(\tau(\mathcal{L}_{1}+\mathcal{L}_{2}))\approx
exp(\frac{\tau}{2}\mathcal{L}_{2})exp(\tau\mathcal{L}_{1})exp(\frac{\tau}{2}\mathcal{L}_{2}). \label{StrangSplitting}
\end{align}
In fact, the only difference between the Strang splitting method and the Lie-Trotter splitting method is that the first and last
steps are half of the normal step $\tau$. Thus a more accurate method can be implemented in a very simple way. We skip the details in implementing the Strang splitting scheme here as it is straightforward.

We remark that our new stochastic integrators provide an efficient way to investigate the residual diffusivity.
Because we do not need to solve the advection-diffusion equation Eq.\eqref{CellProblem_EffectiveDiffusivity}, which becomes
extremely challenging when $D_0$ is small. Most importantly, symplectic-preserving schemes provide a robust and accurate
numerical integrator for long-time integrations. We shall theoretically and numerically study its performance over
existing numerical integrators, such as Euler schemes, in the subsequent sections.

\section{Convergence analysis}\label{sec:ConvergenceAnalysis}
\noindent
In this section, we shall provide some convergence results.
We prove that a linear growth of the global error can be obtained if we apply our
numerical methods to solve a Hamiltonian system with a separable Hamiltonian. In addition, we shall estimate the numerical error of our method in computing the effective diffusivity. Our analysis is based on the BEA technique \cite{Reich:99}, which is a powerful tool for the study of the long-time behaviors of numerical integrators.

\subsection{Weak Taylor expansion}
\noindent
In our derivation, we use $(p,q$) to denote the position of the particle interchangeably with $(x_1,x_2)$. Thus, the
Hamiltonian system defined by Eq.\eqref{eqn:HamiltonianSDE} is rewritten as
\begin{align}\label{eqn:simplifiedSDE}
\begin{cases}
dp=-H_qdt+\sigma dW_1,\\
dq=H_pdt+\sigma dW_2,
\end{cases}
\end{align}
where $H\equiv H(t,p,q)$ is the Hamiltonian, $\sigma_1 = \sigma_2 = \sigma$ is a positive constant, and $dW_{i}$, $i=1,2$ are two independent Brownian motion processes.
We assume the Hamiltonian system has a separable form \cite{ErnstLubich:06}
\begin{align}
H(t,p,q)=F(t,p)+G(t,q)  \label{SeparableHamiltonian}
\end{align}
with $g \equiv H_q = g(t,q)$ and $f \equiv H_p = f(t,q)$.
\begin{remark}
The  separable Hamiltonian is quite a natural assumption and has many applications in physical and engineering sciences.
For instance,  $H(p,q)=\frac{1}{2}p^T p + U(q)$, where the first term is the kinetic energy and the second one is the potential energy.
\end{remark}
One natural way to study the expectations of the paths for the SDE given by Eq.\eqref{eqn:simplifiedSDE} is to consider its associated backward Kolmogorov equation \cite{Risken:1989}. Specifically, we associate the SDE with a partial differential
operator $\mathcal{L}_0$, which is called the generator of the SDE, also known as the flow operator.  For the Hamiltonian system Eq.\eqref{eqn:simplifiedSDE}, the corresponding backward Kolmolgorov equation associated is given by
\begin{align}\label{BackwardKolmolgorovEquation}
\begin{cases}
\frac{\partial}{\partial t}\phi=\mathcal{L}_0\phi, \\
\phi(x,0)=\phi_0(x),
\end{cases}
\end{align}
where the operator $\mathcal{L}_0$ is given by
\begin{align}
\mathcal{L}_0=-g\partial_p+f\partial_q+\frac{1}{2}\sigma^2\partial_p^2+\frac{1}{2}\sigma^2\partial_q^2.
\label{HamiltonianFlowOperator}
\end{align}
 The probabilistic interpretation of Eq.\eqref{BackwardKolmolgorovEquation} is that given initial data $\phi_0(x)$, the solution of Eq.\eqref{BackwardKolmolgorovEquation}, $\phi(x,t)$, satisfies $\phi(x,t)=E(\phi_0(X_t)|X_0=x)$, where $X_t=(p(t),q(t))$ is the solution to Eq.\eqref{eqn:simplifiedSDE}. We integrate Eq.\eqref{BackwardKolmolgorovEquation} from $t=0$ to $t=\Delta t$ and obtain
\begin{align}\label{BackwardKolmolgorovEquation_OneStep}
\phi(x,\Delta t)= \phi(x,0)+ \mathcal{L}_0 \int_{0}^{\Delta t}\phi(x,s)ds.
\end{align}
Under certain regularity assumptions on the solution $\phi(x,t)$, we have the Taylor expansion
\begin{align}\label{BackwardKolmolgorovEquation_Taylor}
\phi(x,s)= \phi(x,0)+ s\frac{\partial }{\partial s}\phi(x,0) +  \cdot\cdot\cdot +
\frac{s^N}{N!}\frac{\partial^N }{\partial s^N}\phi(x,0) + R_{N}(x,s),
\end{align}
where $R_{N}(x,s)$ is the remainder term in the Taylor expansion. We substitute the Taylor expansion
Eq.\eqref{BackwardKolmolgorovEquation_Taylor} into Eq.\eqref{BackwardKolmolgorovEquation_OneStep} and get
\begin{align}\label{BackwardKolmolgorovEquation_OneStepWithTaylor}
\phi(x,\Delta t)= \phi(x,0)+  \Delta t \mathcal{L}_0 \phi(x,0) +
\sum_{k=1}^{N}\frac{ \Delta t^{k+1}}{(k+1)!} \mathcal{L}_0 \frac{\partial^k }{\partial s^k}\phi(x,0) +O(\Delta t^{N+2}) .
\end{align}
Recall that $\phi(x,0)=\phi_0(x)$ and $\frac{\partial^k }{\partial s^k}\phi(x,0) = \mathcal{L}_0^{k}\phi_0(x)$, we finally obtain
\begin{align}\label{BackwardKolmolgorovEquation_Fianl}
\phi(x,\Delta t)= \phi_0(x)+  \sum_{k=0}^{N}\frac{ \Delta t^{k+1}}{(k+1)!} \mathcal{L}_0^{k+1} \phi_0(x) +O(\Delta t^{N+2}).
\end{align}
The operator $\mathcal{L}_0^{k+1}$ can be computed systematically. For instance, $\mathcal{L}_0$ has 4 terms, then $\mathcal{L}_0^2$ should have at most $4^2=16$ terms.
In this paper, we find that the first order modified equation has already indicated the advantage of the structure preserving scheme. We shall show this in next subsections.
\begin{remark}
Eq.\eqref{BackwardKolmolgorovEquation_Fianl} provides the general framework for us to analyse the truncation error
by numerical methods. Namely, the numerical flow $\phi^{num}(x,\Delta t)=E[\phi_0(X^{num,k}_{\Delta t})|X_0=x]$ generated by some $k$-th order weak method
should satisfy Eq.\eqref{BackwardKolmolgorovEquation_Fianl} up to terms of order $O(\Delta t^{k})$.
\end{remark}
\subsection{First order modified equation}\label{sec:FirstOrder_ModifiedEq}
\noindent
In this section, we shall analyze the numerical errors obtained by our symplectic splitting scheme and Euler Maruyama scheme \cite{Platen:1992}, respectively. We find that the solution obtained by the symplectic splitting scheme follows an asymptotic Hamiltonian while the solution obtained by the Euler Maruyama scheme does not. With our new method, we can achieve a linear growth (instead of an exponential growth) of the global error when we compute effective diffusivity.

After numerical discretization, we  find the following expansion using a first order weak method at $t=\Delta t$,
\begin{align}\label{FirstOrderModified_GeneralErrorForm}
\phi^{num}(x,\Delta t)= \phi_0(x) + \Delta t \mathcal{L}_0 \phi_0(x) + \Delta t^2 \mathcal{A}_1  \phi_0(x) +  O(\Delta t^{3}),
\end{align}
where $\mathcal{A}_1$ is a partial differential operator acting on $\phi_0(x)$ that depends on the choice of the numerical method used to solve Eq.\eqref{eqn:simplifiedSDE}. If we choose a convergent method to discretize the operator $\mathcal{L}_0$ in Eq.\eqref{FirstOrderModified_GeneralErrorForm} and Eq.\eqref{BackwardKolmolgorovEquation_OneStepWithTaylor}, then
the local truncation error is $O(\Delta t^2)$ and the numerical scheme is of weak order one. We refer to \cite{Platen:1992} for the definition and discussion of the weak convergence and strong convergence.

In detail, let $X^{num}(\Delta t)=(p(\Delta t),q(\Delta t))$ denote the numerical solution obtained by one specific choice of the numerical method
in solving Eq.\eqref{eqn:simplifiedSDE}. For instance, if we choose the symplectic splitting method stated in Eq.\eqref{NewStochasticIntegrators_Scheme_L2}, we get
\begin{align}\label{eqn:symsp1}
\begin{cases}
p(\Delta t)=p_0-\Delta tg(\frac{\Delta t}{2},q_0)+\sigma \Delta W_1,\\
q(\Delta t)=q_0+\Delta tf(\frac{\Delta t}{2},p_0-\Delta tg(\frac{\Delta t}{2},q_0))+\sigma \Delta W_2.
\end{cases}
\end{align}
Now $\Delta W_1$, $\Delta W_2$ are two independent random variables of the form $\sqrt{\Delta t}\mathcal{N}(0,1)$. To get $\mathcal{A}_1$, we only need to expand $E(\phi_0(p(\Delta t),q(\Delta t)))$ around point $\phi_0(p_0,q_0)$ along the time variable $\Delta t$. Since we are dealing with a separable Hamiltonian $H$, the operator splitting scheme helps us
obtain a straight-forward adaptive interpolation of Eq.\eqref{eqn:symsp1} for $t\in[0,\Delta t]$, saying $X_t^{num}$. We then have the form\cite{Zygalakis:2011},
\begin{align}
\phi^{num}(x,t)&=E[\phi_0(X_t^{num})|X_0=(p_0,q_0)]\\&=\phi_0(x) + \Delta t \mathcal{L}_0 \phi_0(x) + \Delta t^2 \mathcal{A}_1  \phi_0(x) +  O(\Delta t^{3})\label{eqn:direct_expansion1}
\end{align}
  In the BEA \cite{Reich:99}, we aim to find the generator $\mathcal{L}^{num}$ of this process and the associated backward Kolmogorov equation,
\begin{align}\label{Modified_FlowOperator}
\begin{cases}
\frac{\partial}{\partial t}\phi^{num}=\mathcal{L}^{num}\phi^{num}\\
\phi^{num}(x,0)=\phi_0(x).
\end{cases}
\end{align}
We now denote the generator of this modified equation in an asymptotic form in terms of $\Delta t$,
\begin{align}\label{generator_modifiedEq}
\mathcal{L}^{num}\equiv\mathcal{L}_0 +  \Delta t \mathcal{L}_1 +  \Delta t^2 \mathcal{L}_2 + \cdot\cdot\cdot.
\end{align}
Recall that the operator $\mathcal{L}_0$ is defined in Eq.\eqref{HamiltonianFlowOperator} and the definition of operators $\mathcal{L}_i$, $i\geq 1$ depends on the choice of the numerical method in solving Eq.\eqref{eqn:simplifiedSDE}, i.e. sub \eqref{generator_modifiedEq} into \eqref{BackwardKolmolgorovEquation_Fianl} then compare with \eqref{eqn:direct_expansion1}, we get
\begin{align}
\mathcal{L}_1=\mathcal{A}_1-\frac{1}{2}\mathcal{L}_0^2.
\end{align}
Now let us denote the truncated generator by,
\begin{align}
\mathcal{L}^{\Delta t,k}:=\mathcal{L}_0+\Delta t\mathcal{L}_1+\cdots+\Delta t^k\mathcal{L}_k.
\end{align}
and denote the corresponding modified flow (if it exists),
\begin{align}\label{eqn:truncatedmodifiedflow}
\begin{cases}
\frac{\partial}{\partial t}\phi^{\Delta t}=\mathcal{L}^{\Delta t,k}\phi^{\Delta t}\\
\phi^{\Delta t}(x,0)=\phi_0(x).
\end{cases}
\end{align}
Inspired by the weak convergence proof in \cite{Platen:1992}, we shall focus on estimating the
upper bound of the uniform numerical error for the perturbed flows.
\begin{lemma}\label{lem:FlowComparing}
	Let $\phi^{num}$ and $\phi^{\Delta t}$ be defined in \eqref{Modified_FlowOperator} and \eqref{eqn:truncatedmodifiedflow}, respectively. We assume that $\phi_0\in \mathcal{C}^{\infty}$ and its Ito-Taylor expansion coefficients in the hierarchy set $\Gamma_{k+1}\bigcup B(\Gamma_{k+1})$ are Lipschitz and have at most linear growth. If the solution to the first order modified flow, $\phi^{\Delta t}$ converges to $\phi$ as $\Delta t\to 0$, then we have the following error estimate
	\begin{align}
	||\phi^{num}(x,t)-\phi^{\Delta t}(x,t)||\le C(T)\Delta t^{k+1}
	\end{align}
\end{lemma}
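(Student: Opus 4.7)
The plan is to follow the standard weak backward-error-analysis route: establish a one-step (local) comparison of order $\Delta t^{k+2}$ between $\phi^{num}$ and $\phi^{\Delta t}$, and then accumulate the local errors over $N=\lfloor T/\Delta t\rfloor$ steps to obtain the claimed global order $\Delta t^{k+1}$.

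First I would make the one-step comparison precise. By the very construction of the operators $\mathcal{L}_1,\ldots,\mathcal{L}_k$ in \eqref{generator_modifiedEq}, the $\Delta t$-Taylor expansion of $\phi^{\Delta t}(x,\Delta t)$ obtained by plugging $\mathcal{L}^{\Delta t,k}$ into \eqref{BackwardKolmolgorovEquation_Fianl} matches the direct Ito-Taylor expansion of $\phi^{num}(x,\Delta t)=E[\phi_0(X^{num}_{\Delta t})\mid X_0=x]$ through order $\Delta t^{k+1}$ inclusive. The leftover at order $\Delta t^{k+2}$ is controlled by the standard Ito-Taylor remainder estimate; the hypothesis that the coefficients in $\Gamma_{k+1}\cup B(\Gamma_{k+1})$ are Lipschitz with at most linear growth is exactly what is needed to bound this remainder uniformly by $C_1(1+|x|^p)\,\Delta t^{k+2}$.

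Second, I would propagate this one-step estimate over $N$ iterations. Writing $S_{\Delta t}:=e^{\Delta t\,\mathcal{L}^{num}}$ and $T_{\Delta t}:=e^{\Delta t\,\mathcal{L}^{\Delta t,k}}$ for the two one-step solution operators, a telescoping identity gives
\begin{align*}
S_{\Delta t}^{N}\phi_0-T_{\Delta t}^{N}\phi_0
=\sum_{n=0}^{N-1}S_{\Delta t}^{N-1-n}\bigl(S_{\Delta t}-T_{\Delta t}\bigr)T_{\Delta t}^{n}\phi_0.
\end{align*}
Applying the one-step bound with initial datum $T_{\Delta t}^{n}\phi_0=\phi^{\Delta t}(\cdot,n\Delta t)$ in place of $\phi_0$ produces a factor $\Delta t^{k+2}$ per summand; combined with stability of the outer semigroup $S_{\Delta t}^{N-1-n}$ on the polynomial-growth class (via uniform moment bounds on the numerical process $X^{num}$) and summing the $N\sim T/\Delta t$ terms, this yields the desired global bound $O(\Delta t^{k+1})$ with $C(T)$ scaling linearly in $T$.

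The hard part will be the second ingredient: guaranteeing that the intermediate iterates $\phi^{\Delta t}(\cdot,n\Delta t)$, together with all derivatives entering the Ito-Taylor coefficients, remain in the same regularity and growth class uniformly in $\Delta t$ and in $n\le N$, so that the one-step estimate can be reapplied at every step with a constant independent of $n$. This is precisely why the assumption is phrased in terms of the full hierarchy set $\Gamma_{k+1}\cup B(\Gamma_{k+1})$ rather than on the generator alone: the enlarged set captures every derivative that appears when iterating the semigroup through order $k+1$, and the assumed Lipschitz and linear-growth conditions then close a discrete Gronwall argument for the telescoping sum. Once these regularity and moment bounds are in place, the remainder of the proof is bookkeeping within the Ito-Taylor calculus, using the assumed convergence $\phi^{\Delta t}\to\phi$ to justify passage to the limit in the remainder terms.
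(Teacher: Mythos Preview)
Your proposal is correct and follows essentially the same route as the paper: a one-step (local) error of order $\Delta t^{k+2}$ via matching Ito--Taylor expansions, then summation over $N\sim T/\Delta t$ steps to obtain the global $C(T)\Delta t^{k+1}$ bound. The paper's own proof is terser: rather than writing the telescoping sum and the regularity propagation explicitly, it simply observes that the local matching makes $X^{num}$ a weak approximation of order $k+1$ to the SDE generated by $\mathcal{L}^{\Delta t,k}$ and then invokes the Kloeden--Platen weak-convergence machinery (specifically Proposition~5.11.1 in \cite{Platen:1992}) together with the assumed Lipschitz and linear-growth conditions to pass from local to global; your explicit telescoping identity and discrete Gronwall are precisely what that machinery unpacks to.
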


\begin{proof}
	Eq.\eqref{FirstOrderModified_GeneralErrorForm} shows that the operator $L^{\Delta t}$ approximates
the operator $L^{\Delta t,k}$ locally in the time interval $[0,\Delta t]$ with the truncation error $O(\Delta t^{k+2})$. 
This implies that $X_t^{\Delta t}$ is a $k+1$-th order weak approximation to the SDE related to $X^{\Delta t,k}_t$ locally, i.e.
\begin{equation}
\phi_0(X^{num}_{\Delta t})-\phi_0(X^{\Delta t,k}_{\Delta t})=\phi_0(X_{0}^{num})-\phi_0(X_{0}^{\Delta t,k})+\sum_{\alpha\in B(\Gamma_{k+1})}I_{\alpha}[\phi_{0,\alpha}(X^{\Delta t,k}_{(\cdot)})]_{0,\Delta t}
\end{equation}
Here we refer to the Chapter 5.5 in \cite{Platen:1992} for more detailed definition of multi-index stochastic Ito integration notation $I_{\alpha}$. Proposition 5.11.1 in \cite{Platen:1992} gives an estimate for the $I_{\alpha}$,
\begin{equation}
|E\sum_{\alpha\in B(\Gamma_{k+1})}I_{\alpha}[\phi_{0,\alpha}(X^{\Delta t,k}_{(\cdot)})]_{0,\Delta t}|\leq C(L^{\Delta t,k})\Delta t^{k+2}
\end{equation}
Since the operator $L^{\Delta t,k}$ approximates $L_0$, $\lim_{\Delta t\to 0}C(L^{\Delta t,k})=C(L)$. Combining with Lipschitz and linear growth condition, the final weak convergence order should be  $C(T)O(\Delta t^{k+1})$ when $\Delta t$ is small enough.
\end{proof}
\begin{remark}
Figure \ref{fig:workonflow} shows the general procedure of our convergence analysis.
Our goal is to develop efficient numerical method so that we can reduce the error in calculating effective diffusivity $|D^{E,num}-D^E|$, which is the dashed line on the left. Terms (namely $D^{E,\Delta t}$, $X^{\Delta t,k}_t(or\ X^{\Delta t}_t)$, $\mathcal{L}^{\Delta t,k}:\phi^{\Delta t}$) are introduced from the BEA and play intermediate roles between the numerical solutions (shown in the upper row) and the analytic ones (shown in the bottom row). This framework clearly reveals the main sources of error (i.e. $|D^{E,\Delta t}-D^E|$). These notations are commonly used in this paper.
	\begin{figure}[htbp]
		\centering
		\begin{tikzpicture}
		\matrix (m) [matrix of math nodes,row sep=6em,column sep=8em,minimum width=2em]
		{	D^{E,num}&X^{num}_t & \mathcal{L}^{num}:\phi^{num} \\
			D^{E,\Delta t}&X^{\Delta t,k}_t(or\ X^{\Delta t}_t) &\mathcal{L}^{\Delta t,k}:\phi^{\Delta t} \\
			D^{E}&X_t& \\};
		\path[-stealth]
		(m-1-2) edge node [above,align=left,font=\small] {Weak Taylor Exp} (m-1-3)
		(m-1-2) edge node [below] {at $t=\Delta t$} (m-1-3)
		(m-2-1) edge node [right,align=left,font=\small] {Comparing in cell\\T-invariant, $O(\Delta t^{k})$\\(Thm.\ref{thm:ErrorEstimateOfCellSolution})} (m-3-1)
		(m-2-3) edge node [above] {$\exists$ path} (m-2-2)
		(m-1-3) edge node [left,align=right] {Truncated operator\\$C(T)O(\Delta t^{k+1})$\\(Lem.\ref{lem:FlowComparing})} (m-2-3)
		(m-1-1) edge [bend right, dashed,align=right] node [left] {Final Error\\$O(\Delta t^k)$}(m-3-1)
		(m-1-2) edge node [above,align=left,font=\small] {Calculate from\\ Monte-Carlo path} (m-1-1)
		(m-2-2) edge node [above,align=left,font=\small] {Homogenization \\Approach} (m-2-1)
		(m-3-2) edge node [above,align=left,font=\small] {Particle Definition} (m-3-1)
		(m-3-2) edge node [below,align=left,font=\small] {Homogenization Approach} (m-3-1)
		(m-3-2) edge [-,dashed,align=left] node [right] {Both Hamiltonian flow\\(Thm.\ref{thm:FollowAsymptoticHamiltonian})} (m-2-2);
		\end{tikzpicture}
		\caption{Illustration of backward error analysis for $k$-th order weak scheme}
		\label{fig:workonflow}
	\end{figure}
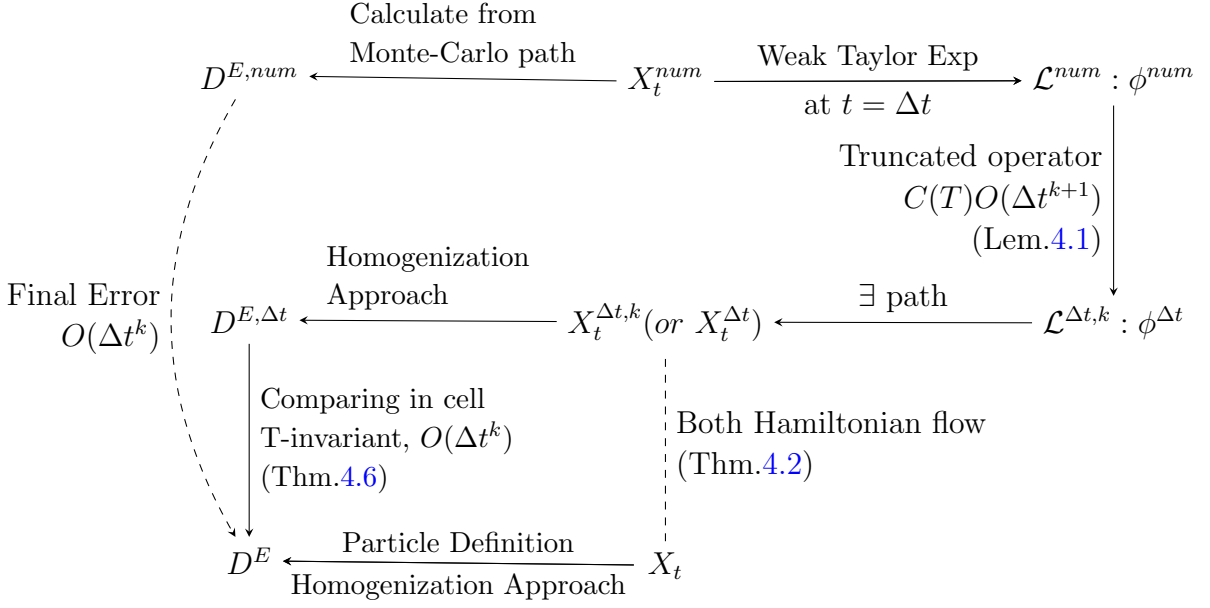
	\end{remark}
The foregoing derivation shows that modified flows allow us to approximate the interpolation of numerical solution with a higher-order accuracy. Hence the modified flows dominate the error in numerical result.
Now we intend to study the behavior of the modified flows.
\begin{theorem}\label{thm:FollowAsymptoticHamiltonian}
For the stochastic differential equation system Eq.\eqref{eqn:simplifiedSDE} with a time independent  and separable Hamiltonian $H(p,q)$ Eq.\eqref{SeparableHamiltonian}, the numerical solution obtained using the symplectic splitting scheme follows an asymptotic Hamiltonian $H^{\Delta t}(p,q)$, or equivalently, the first order modified equation (density function) of the solution is divergence-free. The invariant measure on torus  (defined by $\mathbb{R}^d/\mathbb{Z}^d$, when period is $1$) remains trivial. While the numerical solution obtained using the Euler Maruyama scheme does not have these properties.
\end{theorem}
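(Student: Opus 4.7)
The plan is to compute the first-order modified generator $\mathcal{L}^{num}=\mathcal{L}_0+\Delta t\,\mathcal{L}_1+O(\Delta t^2)$ for each scheme through a weak Taylor expansion of $\phi^{num}(x,\Delta t)=E[\phi_0(p(\Delta t),q(\Delta t))]$, and then test whether the drift part of $\mathcal{L}_1$ is divergence-free. Equivalently, I will check whether the uniform density $\rho\equiv 1$ on the torus $\mathbb{R}^2/\mathbb{Z}^2$ is annihilated by the Fokker--Planck adjoint $(\mathcal{L}^{num})^*$. Since the diffusion piece $\tfrac{1}{2}\sigma^2(\partial_p^2+\partial_q^2)$ is self-adjoint and kills constants, the test reduces to computing the divergence of the drift of $\mathcal{L}_1$.

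For the symplectic splitting scheme Eq.\eqref{eqn:symsp1}, the deterministic substep $(p_0,q_0)\mapsto(p_0-\Delta t\,G'(q_0),\,q_0+\Delta t\,F'(p_0-\Delta t\,G'(q_0)))$ is the standard symplectic Euler map of the separable Hamiltonian $H=F(p)+G(q)$. A direct calculation of its Jacobian,
\[
\begin{pmatrix} 1 & -\Delta t\,G''(q_0) \\ \Delta t\,F''(p_0-\Delta t\,G'(q_0)) & 1-\Delta t^2 F''(\cdot)G''(q_0)\end{pmatrix},
\]
gives determinant exactly $1$, so this substep is volume preserving. The stochastic substep is an additive Gaussian perturbation independent of the state, hence it acts on densities by convolution and also preserves Lebesgue measure. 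Composing, the full one-step map preserves the uniform measure on the torus exactly, which forces $(\mathcal{L}^{num})^*\,1\equiv 0$ at every order in $\Delta t$; in particular the drift of $\mathcal{L}_1$ is divergence-free, which is the first assertion. A Baker--Campbell--Hausdorff argument applied to the two Hamiltonian sub-flows (with Hamiltonians $F(p)$ and $G(q)$) then identifies the first-order correction as the Hamiltonian vector field of a modified Hamiltonian $H^{\Delta t}=H+\tfrac{\Delta t}{2}\{F,G\}+O(\Delta t^2)$, where $\{\cdot,\cdot\}$ is the Poisson bracket, proving the ``asymptotic Hamiltonian'' claim.

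For the Euler--Maruyama scheme $p_1=p_0-\Delta t\,G'(q_0)+\sigma\Delta W_1$, $q_1=q_0+\Delta t\,F'(p_0)+\sigma\Delta W_2$, the deterministic Jacobian is
\[
\begin{pmatrix} 1 & -\Delta t\,G''(q_0)\\ \Delta t\,F''(p_0) & 1\end{pmatrix}, \qquad \det=1+\Delta t^{2}F''(p_0)G''(q_0),
\]
which is not identically one. Carrying out the analogous weak Taylor expansion and forming $\mathcal{L}_1=\mathcal{A}_1-\tfrac{1}{2}\mathcal{L}_0^2$ explicitly, one finds a drift correction whose divergence is a nonzero multiple of $F''(p)G''(q)$ at order $\Delta t$. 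Hence $(\mathcal{L}^{num})^*\,1\neq 0$ at first order and the uniform measure on the torus is not invariant under the Euler--Maruyama modified flow.

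The main obstacle is the bookkeeping in the weak expansion: one must carefully expand $E[\phi_0(p(\Delta t),q(\Delta t))]$ up to order $\Delta t^{2}$, tracking all drift-noise cross terms using that odd moments of $\Delta W_i$ vanish and even ones contribute factors of $\Delta t$, and then cancel $\tfrac{1}{2}\mathcal{L}_0^2$ cleanly to isolate $\mathcal{L}_1$. Once the expansion is in hand, the divergence-free (respectively divergence-full) assertion reduces to the elementary Jacobian-determinant identities displayed above, so the volume-preservation viewpoint bypasses having to write out every term of $\mathcal{L}_1$ explicitly.
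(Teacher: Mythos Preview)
Your volume-preservation argument for the symplectic splitting scheme is correct and gives a cleaner route to the invariant-measure claim than the paper's proof: since the symplectic-Euler substep has Jacobian determinant exactly $1$ and the additive-noise substep acts by convolution, the uniform density on the torus is preserved \emph{exactly} by the one-step map, hence $(\mathcal{L}^{num})^{*}1=0$ at every order in $\Delta t$. The paper instead carries out the full weak Taylor expansion of $E[\phi_0(p(\Delta t),q(\Delta t))]$, forms $\mathcal{L}_1=\mathcal{A}_1-\tfrac12\mathcal{L}_0^2$ explicitly for both schemes (their Eqs.~\eqref{generator_modifiedEq_L1symflow} and \eqref{generator_modifiedEq_L1Eulerflow}), writes out the associated modified SDEs, and then checks the trivial invariant measure by hand via the modified Fokker--Planck equation. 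Your Jacobian shortcut bypasses that bookkeeping for the invariant-measure assertion, which is a genuine simplification.

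However, your identification of the modified Hamiltonian via BCH is incomplete. A deterministic BCH on the sub-Hamiltonians $F(p)$ and $G(q)$ yields only the $\tfrac12\{F,G\}=\tfrac12 fg$ correction, whereas the paper's explicit computation gives
\[
H^{\Delta t}=H-\Delta t\Big(\tfrac12 fg+\tfrac{\sigma^{2}}{4}(f'+g')\Big),
\]
with an additional $\sigma^{2}$-dependent piece (their Eq.~\eqref{symH}). That piece arises from the interaction of the noise and the deterministic substep in the weak expansion (when you form $\tfrac12\mathcal{L}_0^{2}$ the Laplacian mixes with the advection), and a BCH argument on the drift flows alone cannot produce it. Relatedly, your claim that ``the test reduces to computing the divergence of the drift of $\mathcal{L}_1$'' is not justified as stated: $\mathcal{L}_1$ also contains the second-order term $\big(-\tfrac{\sigma^{2}}{2}f'+\tfrac{\sigma^{2}}{2}g'\big)\partial_p\partial_q$, so the modified diffusion is state-dependent and its contribution to $(\mathcal{L}_1)^{*}1$ must be checked separately (it happens to vanish here because $f'$ depends only on $p$ and $g'$ only on $q$). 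Your volume argument still delivers the invariant-measure conclusion, but to conclude that the \emph{drift itself} is divergence-free---and hence, in two dimensions, Hamiltonian---you need either this extra check on the diffusion correction or the explicit expansion the paper performs. For the Euler--Maruyama half your plan coincides with the paper's: both do the explicit expansion and observe that the $\mathcal{L}_1$-drift has divergence $f'g'\not\equiv 0$.
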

\begin{proof}
We shall compare the generators of modified equations obtained by using the symplectic splitting scheme and Euler Maruyama scheme, respectively. More specifically, we compare the operator $\mathcal{L}_1$ in Eq.\eqref{generator_modifiedEq} obtained from different methods.
In the symplectic splitting scheme, we compute the weak Taylor expansion at time $t=\Delta t$ and get,
\begin{align}\label{generator_modifiedEq_L1symflow}
	\mathcal{L}_1\phi=(\mathcal{A}_1-\frac{1}{2}\mathcal{L}_0^2)\phi=(\frac{1}{2}fg'+\frac{\sigma^2}{4}g'')\phi_p+
(-\frac{1}{2}f'g-\frac{\sigma^2}{4}f'')\phi_q+(-\frac{\sigma^2}{2}f'+\frac{\sigma^2}{2}g')\phi_{pq}
\end{align}
Hence, the modified flow of $X^{\Delta t,k}$ can be written as
\begin{align}\label{symflow}
		\begin{cases}
			dp=(-g+(\frac{1}{2}fg'+\frac{\sigma^2}{4}g'')\Delta t)dt+\sigma dW_1+\Delta t\frac{\sigma}{2} g'dW_2\\
			dq=(f-(\frac{1}{2}f'g+\frac{\sigma^2}{4}f'')\Delta t)dt+\sigma dW_2-\Delta t\frac{\sigma}{2} f'dW_1
		\end{cases}
\end{align}
Similarly, in the Euler Maruyama scheme, we get that
\begin{align}\label{generator_modifiedEq_L1Eulerflow}
\mathcal{L}_1\phi=(\mathcal{A}_1-\frac{1}{2}\mathcal{L}_0^2)\phi=(\frac{1}{2}fg'+\frac{\sigma^2}{4}g'')\phi_p+(\frac{1}{2}f'g-
\frac{\sigma^2}{4}f'')\phi_q+(-\frac{\sigma^2}{2}f'+\frac{\sigma^2}{2}g')\phi_{pq}
\end{align}
And the associated modified flow can be written as
\begin{align}\label{eulerflow}
\begin{cases}
dp=(-g+(\frac{1}{2}fg'+\frac{\sigma^2}{4}g'')\Delta t)dt+\sigma dW_1+\Delta t\frac{\sigma}{2} g'dW_2\\
dq=(f-(-\frac{1}{2}f'g+\frac{\sigma^2}{4}f'')\Delta t)dt+\sigma dW_2-\Delta t\frac{\sigma}{2} f'dW_1
\end{cases}
\end{align}
Compare the results from Eq.\eqref{symflow} and Eq.\eqref{eulerflow}, we can easily find that Eq.\eqref{symflow} follows an asymptotic Hamiltonian,
\begin{align}\label{symH}
H^{\Delta t}\equiv H-\Delta t\big(\frac{1}{2}fg+\frac{\sigma^2}{4}(f'+g')\big),
\end{align}
while the flow Eq.\eqref{eulerflow} obtained from the Euler Maruyama scheme does not have this structure. Furthermore, we introduce notations $v_1$ and $d_1$ to denote extra terms in the
modified flow Eq.\eqref{symflow}, which are defined as
\begin{equation}\label{extraterm_modifiedflow}
v_1=\left(\begin{array}{cc}
\frac{1}{2}fg'+\frac{\sigma^2}{4}g''\\-\frac{1}{2}f'g-\frac{\sigma^2}{4}f'
\end{array}\right), \quad \text{and} \quad
d_1=\left(\begin{array}{cc}
0 &\frac{1}{2}g'\\-\frac{1}{2}f' &0
\end{array}\right)
\end{equation}
Since our numerical method solves a stochastic differential equations determined by
a modified flow Eq.\eqref{symflow}, the density function of particles $u(x,t)$ obtained from our method satisfy a modified
Fokker-Planck equation given by
\begin{equation}\label{ModifiedFokkerPlanck-eq}
u_t=-(v+\Delta tv_1)\nabla u+D_0\nabla\nabla:(I+\Delta t D_1)u,
\end{equation}
where $D_1=\big((I_d+\Delta td_1)(I_d+\Delta td_1)^T-T_d\big)/\Delta t=\left(\begin{array}{cc}
\frac{\Delta t}{4}(g')^2 &\frac{1}{2}(g'-f')\\\frac{1}{2}(g'-f') &\frac{\Delta t}{4}(f')^2
\end{array}\right)$ and we have used the condition $\nabla\cdot v_1=0$ to get $\nabla \big((v+\Delta t v_1)u\big)=(v+\Delta tv_1)\nabla u$.  The inner product
between matrices is denoted by $A:B = tr(A^{T}B) = \sum_{i,j} a_{ij}b_{ij}$.  It follows
that $\Delta  = \nabla\nabla:I $ and $\nabla\nabla:D_1$ are defined accordingly. Then we can check that Eq.\eqref{ModifiedFokkerPlanck-eq} admits trivial invariant measure $u_0(x,t)\equiv 1$.
\end{proof}
We can repeat a similar calculation and generalize the result in Theorem \ref{thm:FollowAsymptoticHamiltonian} to a general time dependant and separable Hamiltonian. Therefore, we obtain the result as follows,
\begin{corollary}\label{col:FollowAsymptoticHamiltonian}
For the stochastic differential equation system Eq.\eqref{eqn:simplifiedSDE} with a time dependent
and separable Hamiltonian $H$ Eq.\eqref{SeparableHamiltonian}, the numerical solution obtained using the
symplectic splitting scheme follows an asymptotic Hamiltonian $H^{\Delta t}$, or equivalently, the first order modified equation (density function) of the solution is divergence-free. The invariant measure on torus (defined by $\mathbb{R}^d/\mathbb{Z}^d$, when period is $1$) remains trivial.. While the numerical solution obtained using the Euler Maruyama scheme does not have these properties.
\end{corollary}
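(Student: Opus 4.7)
The plan is to follow the proof of Theorem \ref{thm:FollowAsymptoticHamiltonian} almost verbatim, handling the explicit time dependence of $f(t,p)$ and $g(t,q)$ by promoting $t$ to a state coordinate. With $t$ augmented to the state, the backward Kolmogorov generator becomes
$$\mathcal{L}_0=\partial_t-g(t,q)\partial_p+f(t,p)\partial_q+\tfrac{\sigma^2}{2}\partial_p^2+\tfrac{\sigma^2}{2}\partial_q^2,$$
and the weak-Taylor framework of Section \ref{sec:FirstOrder_ModifiedEq} carries over unchanged. In the separable case, Eq.\eqref{NewStochasticIntegrators_Scheme_L1} with $\alpha=1$, $\beta=1/2$ reduces to a time-dependent analog of Eq.\eqref{eqn:symsp1} in which $f$ and $g$ are evaluated at the midpoint $t_k+\tau/2$.

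The key computation is to identify the first-order correction $\mathcal{L}_1=\mathcal{A}_1-\tfrac{1}{2}\mathcal{L}_0^2$. Expanding $E[\phi_0(p(\Delta t),q(\Delta t))]$ to $O(\Delta t^2)$ produces two new terms in $\mathcal{A}_1$ compared with the autonomous case, namely $-\tfrac{1}{2}g_t\,\partial_p\phi_0$ and $\tfrac{1}{2}f_t\,\partial_q\phi_0$, coming from the midpoint time argument. At the same time, $\tfrac{1}{2}\mathcal{L}_0^2$ now picks up identical contributions through $\partial_t$ acting on the time-dependent coefficients of $\mathcal{L}_0\phi_0$. These cancel exactly, so that $\mathcal{L}_1$ retains the structural form of Eq.\eqref{generator_modifiedEq_L1symflow} with the ordinary derivatives $f',g'$ replaced by the partial derivatives $f_p,g_q$. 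The corresponding modified drift is then the symplectic gradient of the time-dependent asymptotic Hamiltonian
$$H^{\Delta t}(t,p,q)=H(t,p,q)-\Delta t\,\Bigl(\tfrac{1}{2}f(t,p)g(t,q)+\tfrac{\sigma^2}{4}\bigl(f_p(t,p)+g_q(t,q)\bigr)\Bigr),$$
so the drift is divergence-free. Since $D_{1,11}$ depends only on $(t,q)$, $D_{1,22}$ only on $(t,p)$, and $D_{1,12}$ is a separable difference, the modified Fokker--Planck equation Eq.\eqref{ModifiedFokkerPlanck-eq} continues to admit $u\equiv 1$ as an invariant density on the torus $\mathbb{R}^d/\mathbb{Z}^d$.

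For the Euler--Maruyama scheme, the coefficients $f,g$ are evaluated at the left endpoint $t_k$ rather than at $t_k+\tau/2$, so the midpoint $g_t$ and $f_t$ terms needed to cancel the $\partial_t$ contribution from $\tfrac{1}{2}\mathcal{L}_0^2$ are absent from $\mathcal{A}_1$. The resulting $\mathcal{L}_1$ carries the sign-flipped $f_pg$ term already seen in Eq.\eqref{eulerflow} together with an uncancelled $g_t/f_t$ piece, and so the modified drift is neither a Hamiltonian vector field nor divergence-free. The main obstacle is precisely the bookkeeping of these new time-derivative contributions; I expect the exact cancellation between $\mathcal{A}_1$ and $\tfrac{1}{2}\mathcal{L}_0^2$ to be what forces $\beta=1/2$ in Eq.\eqref{NewStochasticIntegrators_Scheme_L1}, and this cancellation is the only substantive new ingredient needed beyond the proof of Theorem \ref{thm:FollowAsymptoticHamiltonian}.
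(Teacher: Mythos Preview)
Your overall strategy---repeat the Theorem~\ref{thm:FollowAsymptoticHamiltonian} computation with time as an extra state variable---is the same as the paper's ``repeat the same computation'' approach, and your conclusion (divergence-free modified drift, trivial invariant measure on the torus) is correct. However, the two computations do \emph{not} produce the same first-order modified flow. You argue that the $g_t,f_t$ contributions to $\mathcal{A}_1$ cancel exactly against the $\partial_t$ part of $\tfrac12\mathcal{L}_0^2$, leaving $\mathcal{L}_1$ identical to the autonomous expression Eq.~\eqref{generator_modifiedEq_L1symflow}. The paper instead displays the modified flow Eq.~\eqref{symflow_timedependent}, in which the drift \emph{retains} the terms $+\tfrac12 g_t$ and $-\tfrac12 f_t$; equivalently, the paper's asymptotic Hamiltonian is $H^{\Delta t}=H-\Delta t\bigl(\tfrac12 fg+\tfrac{\sigma^2}{4}(f'+g')+\tfrac12 H_t\bigr)$, differing from yours by $-\tfrac{\Delta t}{2}H_t$. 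Both versions are divergence-free and Hamiltonian, so either suffices for the corollary, but you should be aware that your augmented-generator bookkeeping leads to a cleaner $\mathcal{L}_1$ than the one the paper records.

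One small inaccuracy in your Euler--Maruyama discussion: the uncancelled $g_t/f_t$ piece you identify, namely $+\tfrac12 g_t$ in $dp$ and $-\tfrac12 f_t$ in $dq$, is itself a Hamiltonian vector field (for $-\tfrac12 H_t$) and hence divergence-free. The failure of the Euler--Maruyama modified drift to be Hamiltonian comes entirely from the sign-flipped $+\tfrac12 f' g$ term already present in Eq.~\eqref{eulerflow}, exactly as in the autonomous case, not from the time-derivative remnants.
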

\begin{proof}
We repeat the same computation as we did in Thm.\ref{thm:FollowAsymptoticHamiltonian}. In the
symplectic splitting scheme, we find that the corresponding modified flow can be written as
	\begin{align}\label{symflow_timedependent}
	\begin{cases}
	dp=\big(-g+(\frac{1}{2}fg'+\frac{\sigma^2}{4}g''+\frac{1}{2}g_t)\Delta t\big)dt+\sigma dW_1+\Delta t \frac{\sigma}{2} g'dW_2\\
	dq=\big(f-(\frac{1}{2}f'g+\frac{\sigma^2}{4}f''+\frac{1}{2}f_t)\Delta t\big)dt+\sigma dW_2-\Delta t \frac{\sigma}{2} f'dW_1
	\end{cases}
	\end{align}
The rest part is similar with Thm.\ref{thm:FollowAsymptoticHamiltonian}.
\end{proof}
Before we end this subsection, we use an example to demonstrate our main idea. We consider the flow driven by the Taylor-Green velocity field,
\begin{equation}\label{Taylor_Green}
	\begin{cases}
	dp=-\cos(q)\sin(p)dt+\sigma dW_1,\\
	dq=\sin(q)\cos(p)dt+\sigma dW_2.
	\end{cases}
\end{equation}
By introducing two variables $P=p+q$ and $Q=p-q$, we know the dynamic system Eq.\eqref{Taylor_Green} possesses a separable
Hamiltonian, $H=-\cos P-\cos Q$ and the system can be expressed by
\begin{equation}
\begin{cases}
dP=-\sin Q+\sqrt{2}\sigma d\eta_1,\\
dQ=\sin P+\sqrt{2}\sigma d\eta_2,
\end{cases}
\end{equation}
where $\eta_1$ and $\eta_2$ are two independent Brownian motions that are linear combinations of $W_1$ and $W_2$. Substituting into Eq.\eqref{symH} and Eq.\eqref{symflow}, we get,
\begin{align}
H^{\Delta t}=H-\Delta t\big(\frac{1}{2}\sin P\sin Q+\frac{\sigma^2}{2}(\cos P+\cos Q)\big),
\end{align}
and
\begin{equation}
\begin{cases}
dP=-\frac{\partial H^{\Delta t}}{\partial Q}dt+\sqrt{2}\sigma d\eta_1+\Delta t\frac{\sigma}{\sqrt{2}}\cos Qd\eta_2,\\
dQ=\frac{\partial H^{\Delta t}}{\partial P}dt+\sqrt{2}\sigma d\eta_2+\Delta t\frac{\sigma}{\sqrt{2}}\cos Pd\eta_1.
\end{cases}
\end{equation}

Up to now, the new integrator Eq.\eqref{eqn:HamiltonianSDE} is shown to preserve structure of original Hamiltonian system Eq.\eqref{eqn:simplifiedSDE} asymptotically at $O(\Delta t)$. In next subsection, we study effective diffusivity as a behavior of the structure.
\subsection{Error analysis for computing the effective diffusivity}\label{sec:EstiResidualDiffusivity}
\noindent
Recalling Eq.\eqref{Def_EffectiveDiffusivity_Lagrangian}, only distribution of the process is needed, so Eulerian framework is sufficient to get an error estimate. For sake of comparison,  we re-write the effective diffusivity formula Eq.\eqref{Def_EffectiveDiffusivity_Euler} for Eq.\eqref{eqn:HamiltonianSDE} as,
\begin{align}\label{EffectiveDiffusivityTensor}
D^E=D_0\langle(I_d+\nabla w)(I_d+\nabla w)^T\rangle_p.
\end{align}
where $D_0=\sigma^2/2$, which is globally used in context, and cell problem $w$ satisfies,
\begin{equation}\label{FokkerPlanck-cellproblem}
w_t+(v\cdot\nabla w)-D_0\Delta w=-v.
\end{equation}, with the velocity filed $v=(-g, f)^{T}$.
To study effective diffusivity in Eq.\eqref{ModifiedFokkerPlanck-eq}, we turn to the Section 3.10 of \cite{Bensoussan:78}, where
an exact formula for $D^E$ in a non-constant diffusion case is provided. Let $w^{\Delta t}\equiv w^{\Delta t}(t,x)$ denote the periodic solution of the cell problem that is corresponding
to the modified Fokker-Planck equation Eq.\eqref{ModifiedFokkerPlanck-eq}, i.e., $w^{\Delta t}$ satisfies the
following equation
\begin{equation}\label{ModifiedFokkerPlanck-cellproblem}
w^{\Delta t}_t=-(v+\Delta tv_1)\cdot\nabla w^{\Delta t}+D_0\nabla\nabla:(I+\Delta t D_1)w^{\Delta t}-(v+\Delta tv_1).
\end{equation}
 We introduce the operators  $\mathcal{P}_0w^{\Delta t} \equiv - v \nabla w^{\Delta t} + \frac{\sigma^2}{2} \Delta w^{\Delta t}$ and
 $\mathcal{P}_1w^{\Delta t} \equiv - v_1 \nabla w^{\Delta t} + \frac{\sigma^2}{2}\nabla\nabla: D_1 w^{\Delta t}$ to simplify Eq.\eqref{ModifiedFokkerPlanck-cellproblem} as
\begin{equation}\label{eqn:modified_cell}
w^{\Delta t}_t=(\mathcal{P}_0+\Delta t\mathcal{P}_1)w^{\Delta t}-(v+\Delta tv_1).
\end{equation} Now by Theorem \ref{thm:FollowAsymptoticHamiltonian} and Corollary \ref{col:FollowAsymptoticHamiltonian}, Eq.\eqref{symflow} admits trivial invariant measure, so formula for the effective diffusivity tensor turns into,
\begin{equation}\label{ModifiedEffectiveDiffusivityTensor}
D^{E,\Delta t}=D_0\Big\langle (I_d+\nabla w^{\Delta t})(I_d+\Delta t D_1) (I_d+\nabla w^{\Delta t})^T \Big \rangle_p.
\end{equation}
The modified cell problem \eqref{ModifiedFokkerPlanck-cellproblem} and the corresponding effective diffusivity tensor Eq.\eqref{ModifiedEffectiveDiffusivityTensor} enable us to analyse the error in our new method.
\begin{lemma}\label{thm:ExistenceOfCellSolution}
 Eq.\eqref{ModifiedFokkerPlanck-cellproblem}  has a unique solution if the condition $\int_{U_T}w^{\Delta t}dxdt=0$ holds, where $U_T=[0,T]\times U$ is the space-time domain for the periodic function $w$.
\end{lemma}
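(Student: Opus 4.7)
The plan is to view Eq.\eqref{ModifiedFokkerPlanck-cellproblem} as a perturbation of the classical cell problem Eq.\eqref{FokkerPlanck-cellproblem}, and invoke the Fredholm alternative on the space-time torus $U_T$. The classical existence and uniqueness theory (up to an additive constant) for parabolic cell problems with divergence-free drift on the torus is precisely what is developed in Section 3.10 of Bensoussan, Lions, and Papanicolaou, which is the reference already cited. So the task is to verify that the perturbed operator $\partial_t - (\mathcal{P}_0 + \Delta t \mathcal{P}_1)$ fits into that framework and that the solvability condition holds.

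First, I would confirm that for $\Delta t$ sufficiently small, the modified diffusion matrix $I_d + \Delta t D_1$ is uniformly positive definite on $U_T$, since $D_1$ has bounded smooth entries under the standing smoothness assumptions on $H$. Together with smoothness of the modified drift $v + \Delta t v_1$, this makes $\partial_t - (\mathcal{P}_0 + \Delta t \mathcal{P}_1)$ a uniformly parabolic operator with smooth space-time periodic coefficients. Standard parabolic theory then ensures the operator is Fredholm of index zero on the appropriate Sobolev space of periodic functions, with kernel and cokernel both one-dimensional.

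The key step is identifying these one-dimensional spaces. By Theorem \ref{thm:FollowAsymptoticHamiltonian} and Corollary \ref{col:FollowAsymptoticHamiltonian}, the modified flow admits the trivial (uniform) invariant measure, which means the formal adjoint of $\mathcal{P}_0 + \Delta t \mathcal{P}_1$ annihilates constants; by the maximum principle the kernel of the original operator also consists of constants. The Fredholm alternative then reduces existence to checking that the forcing $-(v + \Delta t v_1)$ has zero space-time average. For $v = (-H_q, H_p)^T$, periodicity of $H$ gives $\int_{U_T} v \, dx \, dt = 0$ immediately. For $v_1$, using separability $f = f(t,p)$, $g = g(t,q)$ from Eq.\eqref{SeparableHamiltonian} and the expression Eq.\eqref{extraterm_modifiedflow}, each term factorizes as a product of a one-variable integral with a periodic derivative (e.g. $\int f(t,p)\, g'(t,q) \, dp \, dq = \int f \, dp \cdot \int g' \, dq = 0$, and similarly $\int g'' = \int f'' = 0$), so $\int_{U_T} v_1 \, dx \, dt = 0$ as well.

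Having secured existence modulo constants, uniqueness follows from the one-dimensionality of the kernel: two solutions differ by a constant, and the normalization $\int_{U_T} w^{\Delta t} \, dx \, dt = 0$ picks out a unique representative. The main technical obstacle I anticipate is not the solvability check itself but ensuring that the perturbation $\Delta t \mathcal{P}_1$, which is first-order in derivatives in the drift part and second-order in the diffusion part, does not enlarge the kernel of the adjoint beyond constants. This is precisely what Theorem \ref{thm:FollowAsymptoticHamiltonian} rules out by asserting that the trivial invariant measure persists at $O(\Delta t)$, so the argument closes cleanly for all $\Delta t$ in a neighborhood of zero.
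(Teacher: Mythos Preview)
Your proposal is correct and follows essentially the same route as the paper: uniform ellipticity of $\mathcal{P}_0+\Delta t\mathcal{P}_1$ for $\Delta t$ small, vanishing average of the forcing $-(v+\Delta t v_1)$, Fredholm alternative for existence, and the maximum principle plus normalization for uniqueness. Your write-up is in fact more careful than the paper's, which asserts the solvability condition without explicitly checking the $v_1$ term and does not spell out that the cokernel is spanned by constants; your explicit appeal to Theorem~\ref{thm:FollowAsymptoticHamiltonian} to pin down the adjoint kernel is exactly the right way to close that gap.
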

\begin{proof}
 We first notice that when $\Delta t\ll D_0$, the operator $(\mathcal{P}_0+\Delta t\mathcal{P}_1)$ is uniformly elliptic. The space average of the source  term $-(v+\Delta tv_1)$ vanishes. By the Fredholm alternative, Eq.\eqref{eqn:modified_cell} has nontrivial solutions if $-(v+\Delta tv_1)\not\equiv 0$. Then, using the maximum principle, we get the conclusion that the solution $w^{\Delta t}$ to Eq.\eqref{ModifiedFokkerPlanck-cellproblem} is unique if the condition $\int_{U_T}w^{\Delta t}dxdt=0$ is satisfied.
\end{proof}
Now we derive regularity estimate in this Poincar� map problem \eqref{ModifiedFokkerPlanck-cellproblem}.
\begin{theorem}\label{thm:RegularityOfCellSolution}
	Suppose $w=w(t,x)$ is a space-time periodic solution over the domain $U_T=[0,T]\times U$, which satisfies
	\begin{equation}\label{eqn:regularity est}
	w_t+(v\cdot \nabla w)-D:\nabla\nabla w=S,  \ (t,x)\in U_T=[0,T]\times U,
	\end{equation}
	where $\nabla\cdot v=0$, $D$ is symmetric and its eigen values are between $[D_-,D_+],\ \forall(x,t)$ ,  $S=S(t,x)$ is the source term, which vanishes in average at any time $t$.
	Then, we have the regularity estimate for $w$ as $|\nabla w|_{L_2(U_T)}\leq C|S|_{L_2(U_T)}$ where the constant
	$C$ depends only on the length of the physical domain $U$ and the parameter $D$.
\end{theorem}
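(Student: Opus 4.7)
I would prove the estimate by a straightforward space-time energy argument: multiply Eq.\eqref{eqn:regularity est} by $w$ itself, integrate over $U_T=[0,T]\times U$, use the periodic boundary data together with $\nabla\cdot v=0$ to kill the time-derivative and advection contributions, and close the resulting inequality using the uniform ellipticity of $D$ and a spatial Poincar\'e inequality.

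The cancellations are routine. Time-periodicity gives $\int_{U_T} w_t\,w\,dx\,dt=\tfrac12\int_U[w(T,x)^2-w(0,x)^2]\,dx=0$. The incompressibility $\nabla\cdot v=0$ together with spatial periodicity of $vw^2$ gives $\int_U(v\cdot\nabla w)\,w\,dx=\tfrac12\int_U v\cdot\nabla(w^2)\,dx=0$ for each fixed $t$. Integration by parts in $x$ in the diffusion term yields
\begin{equation*}
-\int_{U_T} D_{ij}\,\partial_i\partial_j w\;w\,dx\,dt=\int_{U_T} D_{ij}\,\partial_i w\,\partial_j w\,dx\,dt+\int_{U_T}(\partial_i D_{ij})\,w\,\partial_j w\,dx\,dt,
\end{equation*}
where the first integral is bounded below by $D_-\|\nabla w\|_{L^2(U_T)}^2$ by the spectral hypothesis on $D$, and the second, which is present only when $D$ genuinely depends on $x$, is absorbed by Young's inequality at the price of a multiple of $\|w\|_{L^2(U_T)}^2$ with constant depending on $\|\nabla D\|_\infty$.

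For the source pairing I exploit the assumption that the spatial mean $\bar S(t):=|U|^{-1}\int_U S(t,x)\,dx\equiv 0$, which allows me to replace $w$ by $w-\bar w(t)$ in the pairing with $S$ at no cost:
\begin{equation*}
\int_{U_T} S\,w\,dx\,dt=\int_{U_T} S\,(w-\bar w(t))\,dx\,dt\;\le\;\|S\|_{L^2(U_T)}\,\|w-\bar w\|_{L^2(U_T)}.
\end{equation*}
The spatial Poincar\'e inequality on $U$ then gives $\|w-\bar w\|_{L^2(U_T)}\le C_P(U)\|\nabla w\|_{L^2(U_T)}$. Combining everything, choosing $\varepsilon$ small enough in Young's inequality to absorb both the $\|w\|^2$ term and a fraction $\varepsilon\|\nabla w\|^2$ into the coercive term, yields the desired estimate with $C$ depending on the diameter of $U$, on $D_-$, and on $\|\nabla D\|_\infty$.

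The step I expect to need the most care is the cross term $\int(\partial_i D_{ij})\,w\,\partial_j w$ coming from an $x$-dependent diffusion matrix: one must simultaneously absorb this into $D_-\|\nabla w\|^2$ and feed its $\|w\|^2$ remainder back through the Poincar\'e inequality without over-counting. Should this prove awkward in full generality, the cleanest alternative is to observe that in the actual application to Eq.\eqref{ModifiedFokkerPlanck-cellproblem} one has $D=D_0(I+\Delta tD_1)$, so the variable part is $O(\Delta t)$ and may be treated perturbatively relative to the dominant constant diffusion; everything else is standard elliptic-energy bookkeeping.
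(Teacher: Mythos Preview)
Your energy argument is precisely the paper's proof: multiply by $w$, integrate over $U_T$, use time-periodicity and incompressibility to kill the first two terms, bound the diffusion contribution below by $D_-\|\nabla w\|_{L^2(U_T)}^2$, and pair $S$ with $w-\bar w(t)$ via Cauchy--Schwarz and the spatial Poincar\'e inequality. You are in fact more careful than the paper on one point: the paper writes $-\int w\,(D{:}\nabla\nabla w)=\int\nabla w^{T} D\,\nabla w$ as an identity and silently omits the cross term $\int(\partial_i D_{ij})\,w\,\partial_j w$ that arises when $D$ varies in $x$. Your instinct that absorbing this term is delicate is correct---Poincar\'e only controls $w-\bar w(t)$, not $\bar w(t)$ itself, so feeding a raw $\|w\|^2$ back does not close---hence your perturbative fallback is the right resolution; in any case the sole invocation of this theorem (in Thm.~\ref{thm:ErrorEstimateOfCellSolution}) uses constant diffusion $D=D_0 I$, where the cross term is absent.
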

\begin{proof}
	We multiply the equation Eq.\eqref{eqn:regularity est} by $w^T$ and integrate in $U$
	\begin{equation}\label{eqn:regularity est2}
	\int_U (w^T w_t+w^Tv\nabla w- w^TD:\nabla\nabla w)dx=\int_U w^T Sdx
	\end{equation}
	We shall notice that,
	\begin{align*}
	\int_U w^T w_t dx  &= \frac{d}{dt}\int_U |w|^2 dx, \\
	\int_U w^Tv\nabla w dx &= -\int_Uw^Tv\nabla w dx=0, \\
	\int_U-w^TD:\nabla\nabla w dx &= \int_U \nabla w^T D\nabla wdx,
	\end{align*}
	where we have used the condition $\nabla\cdot v=0$.  Then, we integrate Eq.\eqref{eqn:regularity est2} over the time period $[0,T]$ and
	the periodic condition of $w$ implies
	\begin{equation}
	\int_{U^T} \nabla w^T D\nabla wdx=\int_{U^T}w^TSdxdt
	\end{equation}
	Let $\bar{w}(t)$ denote the space average of $w$ at time $t$. Since $S$ vanishes in space average at any time $t$, we have
	\begin{equation}
	\int_{U^T}\bar{w}^TSdxdt=0.
	\end{equation}
	In addition, we get the equality
	\begin{equation}
	\big(\int_{U^T} \nabla w^T D\nabla wdx\big)^2=\big(\int_{U^T}(w^T-\bar{w}^T)Sdxdt\big)^2
	\end{equation}
	Applying Poincare inequality on the right hand side and Cauchy-Schwartz on the left, we obtain the estimate
	\begin{equation}\label{estimate_Poincare}
	\int_{U^T} \nabla w^T D\nabla wdx
	\geq D_-\int_{U^T}|\nabla w|^2dxdt\geq\int_{[0,T]}C_U\int_U |w-\bar{w}|^2dxdt=\int_{U^T} |w-\bar{w}|^2dxdt
	\end{equation}
	\begin{equation}\label{estimate_CauchySwartz}
	(\int_{U^T}(w^T-\bar{w}^T)Sdxdt)^2\leq \int_{U^T}|S|^2dxdt \int_{U^T}|w-\bar{w}|^2dxdt
	\end{equation}
	Combining the inequalities Eq.\eqref{estimate_Poincare} and Eq.\eqref{estimate_CauchySwartz}, we finally get the regularity estimate in $L_2\ norm$.
	\begin{equation}
	|\nabla w|_{L_2(U^T)}\leq \frac{C(U)}{D_-}|S|_{L_2(U^T)}.
	\end{equation}
\end{proof}
Given the regularity estimate of $w^{\Delta t}$ in \eqref{ModifiedFokkerPlanck-cellproblem}, we can easily get estimate for the error
between solutions to Eq.\eqref{FokkerPlanck-cellproblem} and Eq.\eqref{ModifiedFokkerPlanck-cellproblem}.
We summarize the main result into the following theorem.
\begin{theorem}\label{thm:ErrorEstimateOfCellSolution}
Let $w(x,t)$ and $w^{\Delta t}(x,t)$ be the solution to the Eq.\eqref{FokkerPlanck-cellproblem} and Eq.\eqref{ModifiedFokkerPlanck-cellproblem}, respectively. We have the estimate
$|\nabla w - \nabla w^{\Delta t}|_{L_2(U_T)}\leq C_U\frac{\Delta t}{D_0}|S_e|_{L_2(U_T)}$, where $S_e=\mathcal{P}_1w^{\Delta t}-v_1$ is the source term.
\end{theorem}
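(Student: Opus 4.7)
The plan is to reduce the claim directly to the regularity estimate of Theorem \ref{thm:RegularityOfCellSolution}, applied to the difference $e \equiv w^{\Delta t} - w$. First I would subtract the original cell equation Eq.\eqref{FokkerPlanck-cellproblem} from the modified cell equation Eq.\eqref{eqn:modified_cell}, rewritten as $w^{\Delta t}_t + v \cdot \nabla w^{\Delta t} - D_0 \Delta w^{\Delta t} = \Delta t\,(\mathcal{P}_1 w^{\Delta t} - v_1) - v = \Delta t\, S_e - v$. The inhomogeneous terms $-v$ cancel, leaving
\begin{equation*}
e_t + v \cdot \nabla e - D_0 \Delta e \;=\; \Delta t\, S_e,
\end{equation*}
which is precisely of the form Eq.\eqref{eqn:regularity est} with advection field $v$ (incompressible by hypothesis) and constant diffusion tensor $D = D_0 I$, so $D_- = D_0$.

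Next I would verify the zero-spatial-mean hypothesis on the source, which is what Theorem \ref{thm:RegularityOfCellSolution} requires. Taking the space average of $S_e = \mathcal{P}_1 w^{\Delta t} - v_1$, the term $-v_1 \cdot \nabla w^{\Delta t}$ in $\mathcal{P}_1 w^{\Delta t}$ can be rewritten as $-\nabla \cdot (v_1\, w^{\Delta t})$ using $\nabla \cdot v_1 = 0$ (established inside the proof of Theorem \ref{thm:FollowAsymptoticHamiltonian}), which vanishes on the periodic cell; the second-derivative term $\nabla\nabla : (D_1 w^{\Delta t})$ vanishes in average by periodicity as well; and $\overline{v_1} = 0$ follows from the solvability of Eq.\eqref{ModifiedFokkerPlanck-cellproblem} (Lemma \ref{thm:ExistenceOfCellSolution}) combined with $\overline{v} = 0$. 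Hence $\overline{S_e}(t) = 0$ for each $t$, as required.

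With these hypotheses in place, Theorem \ref{thm:RegularityOfCellSolution} applied to the equation for $e$ yields
\begin{equation*}
|\nabla e|_{L_2(U_T)} \;\leq\; \frac{C(U)}{D_0}\, \Delta t\, |S_e|_{L_2(U_T)},
\end{equation*}
which is the stated bound after relabelling $C_U \equiv C(U)$. The only step requiring genuine care is the verification that $\overline{S_e} = 0$; everything else is either algebraic rearrangement or a direct invocation of the previously proved regularity lemma. I would also remark that the appearance of $D_0$ in the denominator reflects the degeneration of uniform ellipticity as $D_0 \to 0$, so the estimate is informative only in the regime $\Delta t \ll D_0$ already required in Lemma \ref{thm:ExistenceOfCellSolution}; outside this regime the modified cell problem itself loses its ellipticity and the argument does not control the residual diffusion limit.
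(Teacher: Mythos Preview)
Your proposal is correct and follows essentially the same route as the paper: subtract the two cell problems to obtain a parabolic equation for the error $e$ with source $\Delta t\, S_e$, and then invoke Theorem~\ref{thm:RegularityOfCellSolution} with $D_- = D_0$. You are in fact more careful than the paper in explicitly checking the zero spatial-mean hypothesis on $S_e$; the paper's proof omits this verification and instead appends an extra step---bounding $|S_e|_{L_2(U_T)}$ uniformly in $\Delta t$ via parabolic regularity for $w^{\Delta t}$---which is not required by the theorem as stated but is used downstream in Corollary~\ref{lem:EffectiveDiffusivityError}.
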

\begin{proof}
Let $e\equiv e(x,t)=w(x,t)-w^{\Delta t}(x,t)$ denote the error. One can easily find that $e$ is a space-time periodic function
over $U_T=[0,T]\times U$ and satisfies the following equation
\begin{equation}\label{eqn:ErrorEstimate}
e_t+(v\cdot \nabla e)-D_0\Delta e=(\Delta t)S_e,
\end{equation}
where the source term $S_e$ is defined above. So we directly apply the regularity estimate for the parabolic-type
equation obtained in Thm.\ref{thm:RegularityOfCellSolution} and obtain,
\begin{equation}\label{eqn:ErrorEstimateResult}
|\nabla e|_{L_2(U^T)}\leq C(U)\frac{\Delta t}{D_0}|\mathcal{P}_1w^{\Delta t}-v_1|_{L_2(U^T)}
\end{equation}
Again when $\Delta t\ll D_0$, the operator $\frac{\partial}{\partial t}+(\mathcal{P}_0+\Delta t\mathcal{P}_1)$ is uniformly parabolic and the diffusion coefficients $D=D_0+\Delta tD_1$ is positive and uniformly bounded below (i.e.$D_-\to D_0$) for any $\Delta t$ small enough.
By regularity estimate of parabolic equation (a concrete estimate may comes from \cite{Evans:2010}), we can get $w^{\Delta t}$, $\nabla w^{\Delta t}$ and $\nabla \nabla :w^{\Delta t}$ are uniformly bounded in $L_2(U_T)$ for any $\Delta t$ small enough, hence,
\begin{align}
|\mathcal{P}_1w^{\Delta t}-v_1|_{L_2(U_T)}=|(-v_1\nabla +D_0\nabla\nabla:D_1)w^{\Delta t}-v_1|_{L_2(U_T)}\leq C,
\end{align}
where the constant $C$ is independent of $\Delta t$.
\end{proof}
Finally, based on the error estimate for the solutions to the cell problems \eqref{FokkerPlanck-cellproblem} and \eqref{ModifiedFokkerPlanck-cellproblem}, we are able to get the error analysis for the effective diffusivity.
\begin{remark}
	From Eq.\eqref{eqn:ErrorEstimateResult}, we shall state, a proper setting in calculating effective diffusivity should be
	\begin{equation}\label{eqn:timestep_selection}
		\Delta t\sim D_0=\frac{\sigma^2}{2}.
	\end{equation}
	\end{remark}
\begin{corollary}\label{lem:EffectiveDiffusivityError}
Let $D^E$ and $D^{E,\Delta t}$ denote the effective diffusivity tensor computed by Eq.\eqref{EffectiveDiffusivityTensor} and Eq.\eqref{ModifiedEffectiveDiffusivityTensor}. Then, the error of the effective diffusivity tensor can be bounded by
\begin{align}\label{eqn:EffectiveDiffusivityError}
|D^{E,\Delta t}-D^E|\le C \Delta t,
\end{align}
where the constant $C$ does not depend on time $T$.
\end{corollary}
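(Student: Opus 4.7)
The plan is to exploit the bilinear structure of the two effective-diffusivity formulas and use the $L^2$-bound on $\nabla w - \nabla w^{\Delta t}$ established in Theorem \ref{thm:ErrorEstimateOfCellSolution}. Introduce the shorthands $A := I_d + \nabla w$ and $A^{\Delta t} := I_d + \nabla w^{\Delta t}$. Then
\begin{align*}
D^{E,\Delta t} - D^{E}
&= D_0 \bigl\langle A^{\Delta t}(I_d+\Delta t D_1)(A^{\Delta t})^T - A A^T \bigr\rangle_p \\
&= D_0 \bigl\langle (A^{\Delta t}-A)(A^{\Delta t})^T + A\bigl((A^{\Delta t})^T-A^T\bigr)\bigr\rangle_p
  + D_0\Delta t \bigl\langle A^{\Delta t} D_1 (A^{\Delta t})^T\bigr\rangle_p.
\end{align*}

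Next I would bound each piece. Since $\langle\cdot\rangle_p$ is a normalised space-time average over $U_T$, applying Cauchy--Schwarz to the first bracket yields
$$
D_0 \bigl|\langle (A^{\Delta t}-A)(A^{\Delta t})^T\rangle_p\bigr|
\le D_0\,|A^{\Delta t}-A|_{L^2(U_T)}\,|A^{\Delta t}|_{L^2(U_T)},
$$
and likewise for the other cross term. By Theorem \ref{thm:ErrorEstimateOfCellSolution}, $|A^{\Delta t}-A|_{L^2(U_T)}=|\nabla w^{\Delta t}-\nabla w|_{L^2(U_T)}\le C(U)\frac{\Delta t}{D_0}|S_e|_{L^2(U_T)}$, and the factor $D_0$ in front cancels the $D_0^{-1}$ on the right, yielding an $O(\Delta t)$ contribution provided $|S_e|_{L^2}$, $|A|_{L^2}$, and $|A^{\Delta t}|_{L^2}$ are all bounded uniformly in $\Delta t$. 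For the remaining term, $D_1$ has entries built from $f',g'$ and their squares times $\Delta t$, so $\|D_1\|_{L^\infty(U_T)}\le C$ uniformly in $\Delta t$ for $\Delta t$ small, and the resulting contribution is $D_0\Delta t\cdot \|D_1\|_{L^\infty}\cdot |A^{\Delta t}|_{L^2}^2 = O(\Delta t)$.

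The one nontrivial ingredient is the uniform (in $\Delta t$) $L^2$-bound on $\nabla w^{\Delta t}$ and $\nabla\nabla w^{\Delta t}$ needed both to bound $|A^{\Delta t}|_{L^2}$ and to control $|S_e|_{L^2} = |\mathcal{P}_1 w^{\Delta t} - v_1|_{L^2}$. This is precisely the parabolic regularity argument alluded to in the proof of Theorem \ref{thm:ErrorEstimateOfCellSolution}: since $(\mathcal{P}_0+\Delta t\mathcal{P}_1)$ is uniformly elliptic for $\Delta t\ll D_0$ and the source $-(v+\Delta t v_1)$ is smooth and periodic with vanishing mean, standard Schauder/energy estimates give a bound on $w^{\Delta t}$ in $H^2(U_T)$ that is uniform in $\Delta t$. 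I regard this as the main obstacle, but it follows from classical results (e.g.\ Evans) once we note that the coefficients of the perturbed operator converge in $C^\infty$ to those of the unperturbed one.

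Combining the three bounds gives $|D^{E,\Delta t}-D^E|\le C\Delta t$, where $C$ depends on $D_0$, the domain $U$, and the $C^2$-norms of $f,g$, but \emph{not} on the final time $T$, since every norm involved is a space-time average over a single period. This yields the claim.
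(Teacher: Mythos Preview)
Your proposal is correct and follows essentially the same strategy as the paper: split $D^{E,\Delta t}-D^E$ into a piece coming from $\nabla w^{\Delta t}-\nabla w$ and a piece coming from the extra factor $\Delta t\,D_1$, then invoke Theorem~\ref{thm:ErrorEstimateOfCellSolution} together with the uniform parabolic regularity of $w^{\Delta t}$ to bound the first piece by $O(\Delta t)$.

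The one minor difference is in how the $\Delta t\,D_1$ term is handled. You bound $D_0\Delta t\,\langle A^{\Delta t}D_1(A^{\Delta t})^T\rangle_p$ directly via $\|D_1\|_{L^\infty}\le C$, obtaining an $O(\Delta t)$ contribution. The paper instead exploits the specific structure of $D_1$: its diagonal entries are already $O(\Delta t)$ and its off-diagonal entries $\tfrac12(g'-f')$ have zero space-time mean, as does $\nabla w^{\Delta t}$; from this the paper argues the $\Delta t\,D_1$ contribution is $O(\Delta t^2)$ rather than $O(\Delta t)$. Your route is more elementary and entirely sufficient for the stated $O(\Delta t)$ bound; the paper's observation, if fully justified, would show that the dominant error comes solely from $\nabla w^{\Delta t}-\nabla w$.
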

\begin{proof}
Recalling Eq.\eqref{ModifiedEffectiveDiffusivityTensor}, $D^{E,\Delta t}=D_0\Big\langle (I_d+\nabla w^{\Delta t})(I_d+\Delta t D_1) (I_d+\nabla w^{\Delta t})^T \Big \rangle_p$ where
$D_1=\left(\begin{array}{cc}
\frac{\Delta t}{4}(g')^2 &\frac{1}{2}(g'-f')\\\frac{1}{2}(g'-f') &\frac{\Delta t}{4}(f')^2
\end{array}\right)$. We shall see the fact that $\langle \frac{1}{2}(g'-f')\rangle_p=0$, $\langle \nabla w^{\Delta t}\rangle_p=0$. Hence,
\begin{align}
D^{E,\Delta t}-D^E=&D_0(\langle \nabla w^{\Delta t}\nabla w^{\Delta t,T}-\nabla w\nabla w^T\rangle_p+O(\Delta t^2))\\
=&D_0\big((\nabla w^{\Delta t}-\nabla w)\nabla w^{T}+\nabla w(\nabla w^{\Delta t}-\nabla w)^{T}\\&+(\nabla w^{\Delta t}-\nabla w)(\nabla w^{\Delta t}-\nabla w)^{T}+O(\Delta t^2)\big).
\end{align}
Then considering Thm.\ref{thm:RegularityOfCellSolution}, and  we can find that the order of the error in Eq.\eqref{eqn:EffectiveDiffusivityError} is $O(\Delta t)$.
\end{proof}
\begin{theorem}\label{thm:FinalEstimate}
 Solution of Eq.\eqref{eqn:simplifiedSDE} is denoted as $X_t$ and adaptive interpolated process of Eq.\eqref{NewStochasticIntegrators_Scheme_L1} as $X^{num}_t$. To calculate effective diffusivity of  $X^{num}_t$ which both start at $x$, we  define $\tilde{D}^{E,num}(x,t)=E[\frac{(X^{num}_t-X_0)\otimes(X^{num}_t-X_0)}{2t}|X_0=x]$ for $0< t\le T$.
	\begin{align}\label{eqn:dethm}
	\sup_{x}|\tilde{D}^{E,num}(x,t)-D^E|\le C\Delta t+C(T)\Delta t^2
	\end{align}
\end{theorem}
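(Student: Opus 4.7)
The plan is to carry out the triangle-inequality decomposition depicted in Figure \ref{fig:workonflow}, routing from the numerical quantity $\tilde{D}^{E,num}(x,t)$ through the modified-flow effective diffusivity $D^{E,\Delta t}$ down to the true $D^E$. Concretely, I introduce the finite-time second-moment quantity $\tilde{D}^{E,\Delta t}(x,t)$ associated with the adaptively interpolated modified flow $X_t^{\Delta t,1}$ from Section \ref{sec:FirstOrder_ModifiedEq}, and write
$$|\tilde{D}^{E,num}(x,t) - D^E| \leq |\tilde{D}^{E,num}(x,t) - \tilde{D}^{E,\Delta t}(x,t)| + |\tilde{D}^{E,\Delta t}(x,t) - D^{E,\Delta t}| + |D^{E,\Delta t} - D^E|.$$
The two ``outer'' terms correspond exactly to the vertical arrows on the right and left of Figure \ref{fig:workonflow}, and contribute the $C(T)\Delta t^2$ and $C\Delta t$ pieces of Eq.\eqref{eqn:dethm}, respectively.

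For the first term, I apply Lemma \ref{lem:FlowComparing} componentwise with the tensor-valued test function $\phi_0(y) = (y-x)\otimes(y-x)/(2t)$. This is a quadratic polynomial, so its Ito–Taylor expansion coefficients in the relevant hierarchy set trivially satisfy the required Lipschitz and linear-growth conditions, and the lemma with $k=1$ yields a bound of $C(T)\Delta t^2$ uniformly in $x$. For the third term, I invoke Corollary \ref{lem:EffectiveDiffusivityError} directly, which gives $|D^{E,\Delta t} - D^E| \leq C\Delta t$ with $C$ independent of $T$. Here it is essential that Theorem \ref{thm:FollowAsymptoticHamiltonian} and Corollary \ref{col:FollowAsymptoticHamiltonian} apply, so the symplectic splitting preserves the trivial invariant measure on the torus and $D^{E,\Delta t}$ admits the clean representation Eq.\eqref{ModifiedEffectiveDiffusivityTensor} in terms of the modified cell problem Eq.\eqref{ModifiedFokkerPlanck-cellproblem}, to which the regularity estimate of Theorem \ref{thm:RegularityOfCellSolution} applies.

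The main obstacle is the middle term $|\tilde{D}^{E,\Delta t}(x,t) - D^{E,\Delta t}|$, which requires comparing a \emph{finite}-time, deterministic-start second moment of the modified flow to its \emph{asymptotic} homogenized limit, \emph{uniformly} in $x$. To handle this I would apply Ito's formula to the corrector $x \mapsto x + w^{\Delta t}(t,x)$ with $w^{\Delta t}$ the periodic solution of Eq.\eqref{ModifiedFokkerPlanck-cellproblem}, so that after cancellation of the cell-problem source the remainder consists of boundary terms controlled by $\|w^{\Delta t}\|_\infty$ on the torus. The parabolic regularity in Theorem \ref{thm:RegularityOfCellSolution}, together with periodic Sobolev embedding on $\mathbb{T}^d$, bounds $w^{\Delta t}$ uniformly in $x$ and in $\Delta t$ small, and the resulting transient is absorbed into the $O(\Delta t)$ term already present. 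Since all implicit constants depend only on the geometry of the torus and on the bounds on $v$, $v_1$, and $D_1$, taking $\sup_{x\in\mathbb{T}^d}$ preserves the estimate and yields Eq.\eqref{eqn:dethm}.
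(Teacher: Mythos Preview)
Your triangle-inequality decomposition through $\tilde D^{E,\Delta t}(x,t)$ and $D^{E,\Delta t}$, with Lemma~\ref{lem:FlowComparing} and Corollary~\ref{lem:EffectiveDiffusivityError} handling the two outer legs, is exactly the paper's route. Two differences are worth flagging.

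First, the paper does \emph{not} feed the raw quadratic $\phi_0(y)=(y-x)\otimes(y-x)/(2t)$ into Lemma~\ref{lem:FlowComparing}; it instead takes $\phi_0(y)=\sqrt{\epsilon+(y-X_0)^T(y-X_0)}$ for arbitrary $\epsilon>0$, precisely because the quadratic itself fails the linear-growth hypothesis stated in that lemma (the empty multi-index lies in $\Gamma_{k+1}$, so $\phi_0$ itself must have at most linear growth, and a quadratic does not). This regularization yields a bound on $\bigl|\sqrt{\tilde D^{E,num}}-\sqrt{\tilde D^{E,\Delta t}}\bigr|\le C(T)\Delta t^2$ rather than on the tensor difference directly. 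Your claim that the quadratic ``trivially'' satisfies the Lipschitz and linear-growth conditions is therefore not quite right as the lemma is stated.

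Second, for the middle term the paper simply invokes homogenization theory (citing \cite{Bensoussan:78,Stuart:08}) to assert $\lim_{t\to\infty}|\tilde D^{E,\Delta t}(x,t)-D^{E,\Delta t}|=0$, with no rate and no corrector argument. Your It\^o-on-the-corrector sketch is more explicit than anything the paper offers, and correctly isolates the finite-time transient as the genuine difficulty. However, your assertion that this transient is ``absorbed into the $O(\Delta t)$ term already present'' is not justified: the boundary contribution from $\|w^{\Delta t}\|_\infty$ scales like $O(1/t)$, not $O(\Delta t)$, and the paper does not resolve this tension either.
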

\begin{proof}
 	Let  $\tilde{D}^{E,\Delta t}(x,t)=E[\frac{(X_t^{\Delta t}-X_0^{\Delta t})\otimes(X_t^{\Delta t}-X^{\Delta t}_0)}{2t}|X_0^{\Delta t}=x]$. For any $\epsilon>0$. We assume $\phi_0(x)=\sqrt{\epsilon+{(x-X_0)^T(x-X_0)}}$ in Lem.\ref{lem:FlowComparing}, we see that $|\sqrt{\tilde{D}^{E,num}(x,t)}-\sqrt{\tilde{D}^{E,\Delta t}(x,t)}|\le C(T)\Delta t^2$. By homogenization theory (like \cite{Bensoussan:78}, \cite{Stuart:08}), we shall see $\lim_{t\rightarrow\infty}|\tilde{D}^{E,\Delta t}(x,t)-D^{E,\Delta t}|=0$. Finally, Col.\ref{lem:EffectiveDiffusivityError} states $|D^{E,\Delta t}-D^E|\le C \Delta t^2$.  Eq.\eqref{eqn:dethm} is the result of triangle inequality.
\end{proof}
\begin{remark}
	We shall see that in calculating effective diffusivity, we approximate $D^E$ by $\tilde{D}^{E,num}$ in which taking expectation corresponds to simulation ignoring error of Monte-Carlo.
\end{remark}
\begin{remark}
If a long-time behavior of a flow (i.e. effective diffusivity) can be approximated by a truncated flow of the numerical method, the error in approximating such behavior may be dominant by the truncated flow which can be studied analytically. In case of Thm.\ref{thm:FinalEstimate}, general error analysis (like in \cite{Platen:1992}) will state $|\tilde{D}^{E,num}(x,t)-D^E|\le C(T)\Delta t$ where $C(T)$ grows exponentially as $T\to\infty$.
\end{remark}

\section{Numerical results}\label{sec:Numerical_example}
\noindent
In this section, we shall apply our methods to investigate the behaviors of several time-dependent chaotic and stochastic flows.
We are interested in understanding the mechanisms of the diffusion enhancement, the existence of residual diffusivity, highlighting the influence of Lagrangian chaos on flow transport, and long-time performance of different numerical methods.
\subsection{Chaotic cellular flow with oscillating vortices}
\noindent
For the first example, we consider the passive tracer model in which the velocity field is given by a chaotic cellular flow with oscillating vortices. Specifically, the flow is generated by a Hamiltonian defined as $H(t,p,q)=-\frac{1}{k}\cos(kp+B\sin(\omega t))\sin(kq)$. The motion of a particle moving in this chaotic cellular flow is described by the SDE,
\begin{equation}\label{eqn:CellularFlowOscillatingVortices}
\begin{cases}
dp=\sin(kp+B\sin(\omega t))\cos(kq)dt+\sigma dW_1,\\
dq=-\cos(kp+B\sin(\omega t))\sin(kq)dt+\sigma dW_2,
\end{cases}
\end{equation}
with initial data $(p_0,q_0)$. The behavior of Eq.\eqref{eqn:CellularFlowOscillatingVortices} with $\sigma=0$ was intensively studied in \cite{Moura:2010}, which is a two-dimensional incompressible flow representing a lattice of oscillating vortices or roll cells. Moreover, when $B=0$  the flow in Eq.\eqref{eqn:CellularFlowOscillatingVortices} turns into the classic Taylor-Green velocity field. In this setting real fluid elements follow trajectories that are level curves of its Hamiltonian. When $B\neq0$, the trajectories of the passive tracers differ from the streamlines, due to the oscillating vortex in the flow.

When $\sigma>0$ the dynamics of the Eq.\eqref{eqn:CellularFlowOscillatingVortices} will exhibit more structures, which is an interesting model problem to test the performance of our method.  We point out that when $B\neq0$ and $\sigma>0$, the long-time large-scale behavior of the particle model of Eq.\eqref{eqn:CellularFlowOscillatingVortices} has been studied by many researchers, for example in \cite{Fannjiang:94,StuartZygalakis:09}. It shows that the asymptotic behaviors of effective diffusivity $D^E \sim \sigma I_2$ (,or equivalently $D^E \sim\sqrt{2D_0}I_2$),
which means that  for this type of flow there does not exist residual diffusivity.

In our numerical experiments, we choose $k=2\pi$, $\omega=\pi$, $(p_0,q_0)=(0,0)$ in the SDE Eq.\eqref{eqn:CellularFlowOscillatingVortices}. The time step is $\Delta t=10^{-2}$ and the final computational time is $T=10^4$. We consider different $B$ to study the behaviours of effective diffusivity in vanishing viscosity (i.e. $\sigma \to 0$). We compare the numerical obtained using
the sympletic splitting scheme and Euler-Maruyama scheme. In our comparison, we use the same Monte Carlo samples to
discretize the Brownian motions $dW_1$ and $dW_2$. The sample number is $N_{mc}=5000$.

In Figure \ref{fig:eg1f1}, we show the numerical results of effective diffusivity $D^E_{11}$ obtained using different methods and parameters. Left part of the figure shows the results for Taylor-Green velocity field ($B=0$). One can see that the Euler-Maruyama scheme fails to achieve the theoretical analysis for $D^E$, i.e., $D^E \sim \sigma I_2$, while the result obtained using our sympletic splitting scheme agrees with the theory well. Right part of the figure shows the results for $B=2.72$. One again finds that the behaviors of the Euler-Maruyama scheme and our scheme are different.

To further compare the performance of the Euler-Maruyama scheme and our method, we repeat the same experiment with $k=2\pi$, $\omega=\pi$, $(p_0,q_0)=(0,0)$ and $\sigma=10^{-2}$ in Eq.\eqref{eqn:CellularFlowOscillatingVortices}, but try different time step $\Delta t$ with $B=0$ and $B=2.72$ correspondingly. In Figure \ref{fig:eg1f2}, we find that symplectic scheme can achieve very accurate results even using a relatively larger time step, while the Euler-Maruyama scheme cannot give the right answer even using a very smallest time step. As a result of our analysis \ref{lem:EffectiveDiffusivityError} and Eq.\eqref{eqn:CellularFlowOscillatingVortices}, we can say that the numerical result for $D_{11}^E$ has converged to the analytical result. Therefore, we conjecture that the time dependent cellular flow we studied in Eq.\eqref{eqn:CellularFlowOscillatingVortices} with $B=2.72$, we still have $D^E\sim\sigma I_2$. More theoretic analysis of this flow will be reported in our future work.
\begin{figure}[htbp]
	\centering
	\includegraphics[width=0.9\linewidth]{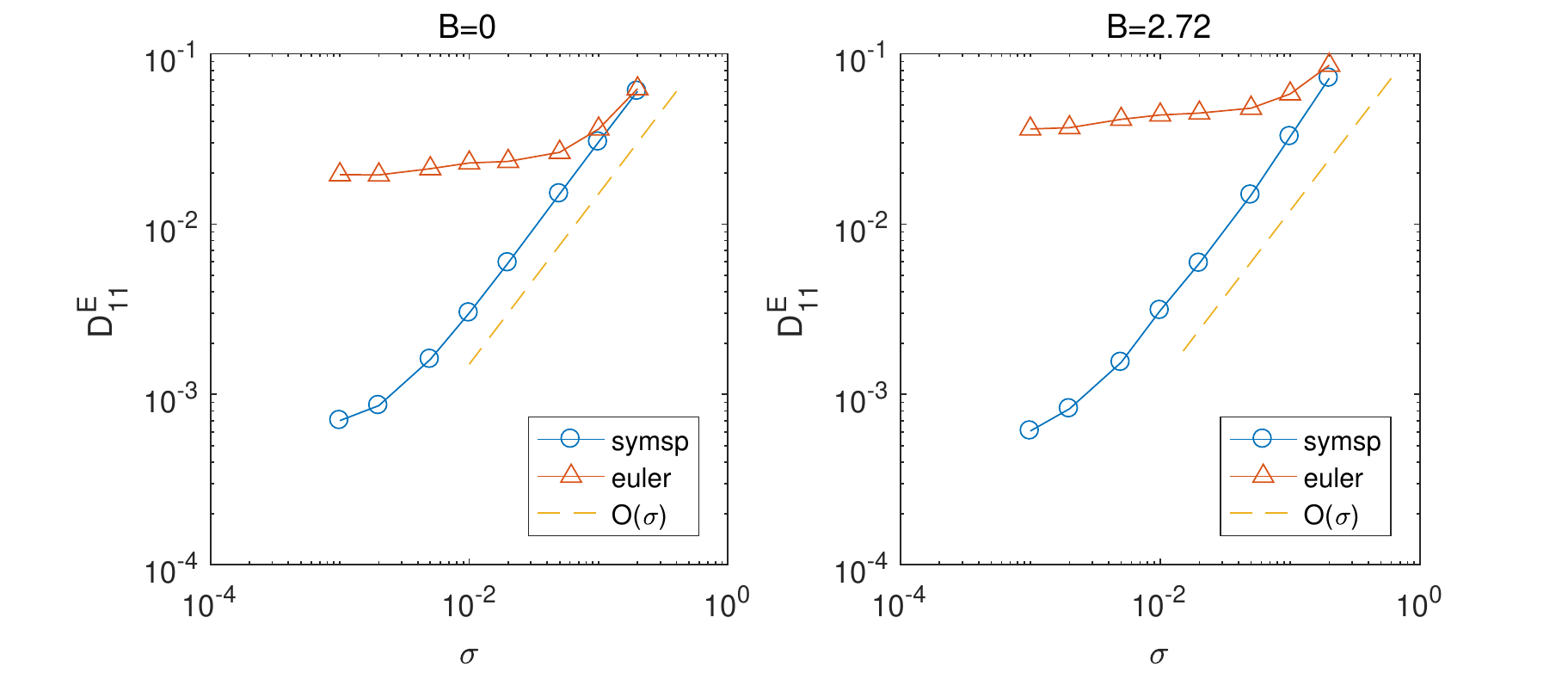}
	\caption{Numerical result for $D^E_{11}$, along with different $\sigma$}
	\label{fig:eg1f1}
\end{figure}
\begin{figure}[htbp]
	\centering
	\includegraphics[width=0.9\linewidth]{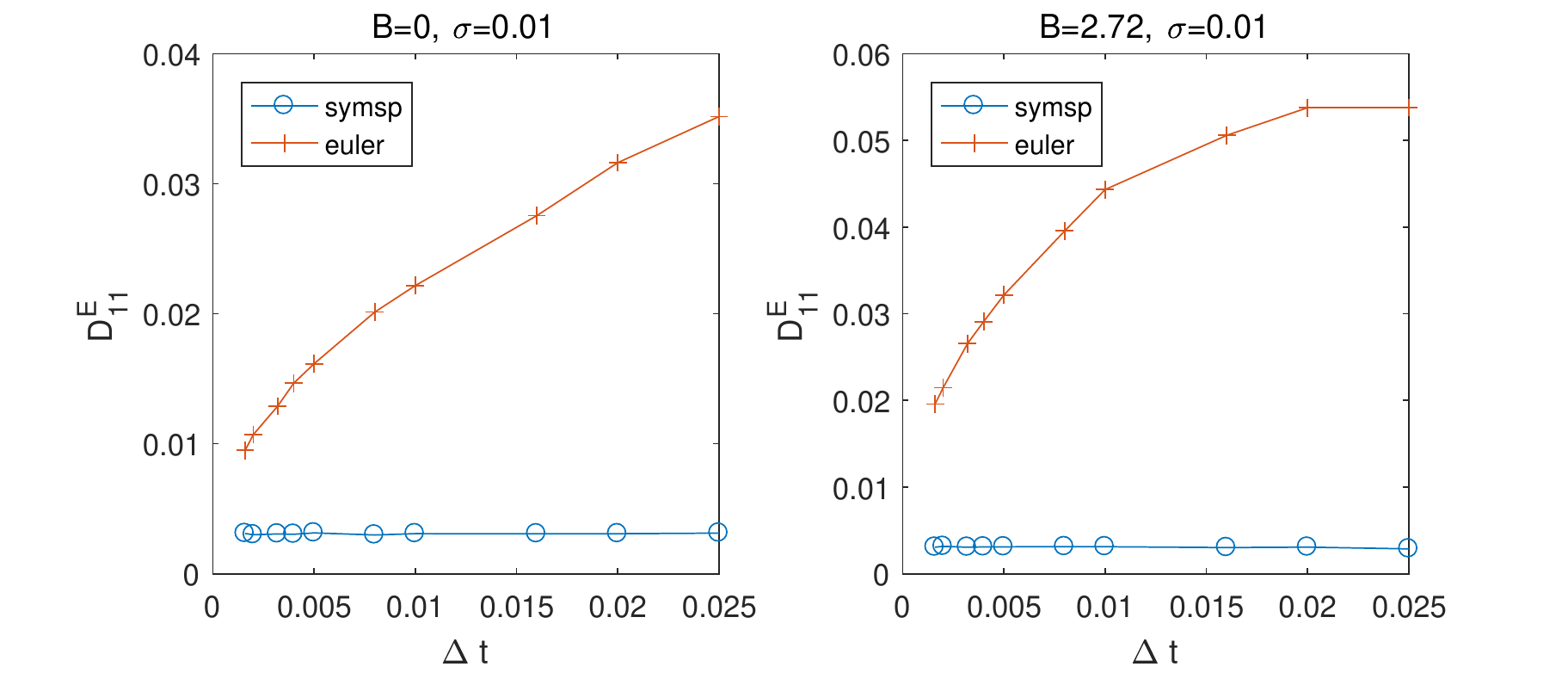}
	\caption{Numerical result for $D^E_{11}$, along with different $\Delta t$}
	\label{fig:eg1f2}
\end{figure}

\begin{remark}
We also tested a time-dependent Taylor-Green velocity field, which is generated by the Hamiltonian defined as
$H(t,p,q)= \frac{1}{k}\big(1+B\sin(\omega t)\big)\cos(kp)\sin(kq)$. This field can be used to model particle motion in
the ocean and in the atmosphere since it contains both vortices (convection cells) and linear uprising/sinking regions.
Our numerical results indicate that the asymptotic behaviours of effective diffusivity $D^E \sim \sigma I_2$. Namely,
there does not exist residual diffusivity for this time-dependent Taylor-Green velocity field.
\end{remark}

\subsection{Investigating residual diffusivity}
\noindent
We now turn to another chaotic cellular flow which is generated from a Hamiltonian defined as $H(t,p,q)=\big(\sin(p)-\sin(q)\big)+\theta \cos(t)\big(\cos(q)-\cos(p)\big)$. Then the particle path
satisfies the following SDE,
\begin{equation}\label{eqn:TimePeriodicCellularFlow}
\begin{cases}
dp=\big(\cos(q)+\theta \cos(t)\sin(q)\big)dt+\sigma dW_1,\\
dq=\big(\cos(p)+\theta \cos(t)\sin(p)\big)dt+\sigma dW_2.
\end{cases}
\end{equation}
The flow in Eq.\eqref{eqn:TimePeriodicCellularFlow} is fully chaotic (well-mixed at $\theta=1$). The first term of the velocity field
$\big(\cos(q),\cos(p)\big)$ is a steady cellular flow, but the second term of the velocity field $\theta \cos(t)\big(\sin(q),\sin(p)\big)$ is a time periodic perturbation that introduces an increasing amount of disorder in the flow trajectories as $\theta$ increases.

The flow in Eq.\eqref{eqn:TimePeriodicCellularFlow} has served as a model of chaotic advection for
Rayleigh-B\'enard experiment \cite{GetlingRayleigh:1998}. This type of flow has been investigated numerically in \cite{JackXinLyu:2017} by solving the cell problem Eq.\eqref{FokkerPlanck-cellproblem}. It was found that $D^E_{11} = O(1)$ as
$D_0\downarrow 0$, which implies the existence of the residual diffusivity. However, the solutions of the advection-diffusion equation Eq.\eqref{FokkerPlanck-cellproblem} develop sharp gradients as $D_0\downarrow 0$ and demand a large amount of computational costs. We shall show that our numerical method gives comparable results with far less computational costs.

In our numerical experiments, we choose time step $\Delta t=5\times10^{-2}$ and final time $T=5\times10^3$ in our symplectic scheme as smaller values of $\Delta t$ and larger values of $T$ do not alter the results significantly. We use $N_{mc}=5000$ independent
Monte Carlo sample paths to discretize the Brownian motions $dW1$ and $dW2$.

In Tab.\ref{tab:Jack_symsp}, we show the numerical results of $D^{E}_{11}$ for different $D_0$ and $\theta$. We also show the results in Fig.\ref{fig:eg2f2}. We observed a \emph{nonmonotone} dependence of $D^E_{11}$ vs. $\theta$ in the small $D_0$ regime, though the overall trend is that $D^E_{11}$ increases with the amount of chaos in the flows. Our numerical results again imply the existence of residual diffusivity for this type of chaotic flow.  As suggested in our previous numerical investigation, the Euler-Maruyama scheme needs a much finer time step to compute the residual diffusivity and the numerical results can be polluted by the diffusion of the scheme. Therefore, we do not test the Euler-Maruyama scheme in this experiment.

\begin{table}[htbp]
	\centering
\begin{tabular}{|c|c|c|c|c|c|c|}
 \hline
	 $\theta$ & $D_0 = 10^{-6} $ & $D_0 = 10^{-5} $ &$ D_0 = 10^{-4} $ & $D_0 = 10^{-3} $ & $ D_0 =10^{-2} $ & $D_0 = 10^{-1} $ \\
 \hline
	0.1   & 0.111547 & 0.084047 & 0.068833 & 0.072755 & 0.157947 & 0.504085 \\
	0.2   & 0.176780 & 0.161091 & 0.159181 & 0.169005 & 0.213418 & 0.547745 \\
	0.3   & 1.187858 & 0.901204 & 0.521761 & 0.356920 & 0.314840 & 0.550539 \\
	0.4   & 0.457187 & 0.453117 & 0.368187 & 0.385328 & 0.422116 & 0.538405 \\
	0.5   & 0.339372 & 0.352455 & 0.326034 & 0.361473 & 0.424855 & 0.645214 \\
	0.6   & 0.268441 & 0.246738 & 0.236696 & 0.256992 & 0.394480 & 0.704883 \\
	0.7   & 0.174016 & 0.169134 & 0.176643 & 0.215472 & 0.413941 & 0.754199 \\
	0.8   & 0.677995 & 0.605287 & 0.606582 & 0.516210 & 0.533211 & 0.796788 \\
	0.9   & 1.357033 & 1.363832 & 1.373394 & 1.084116 & 0.913423 & 0.908773 \\
 \hline
\end{tabular}
\caption{Numerical results of $D^E_{11}$ by the symplectic splitting scheme.}
	\label{tab:Jack_symsp}
\end{table}%

\begin{figure}[htbp]
\centering
\includegraphics[width=0.75\linewidth]{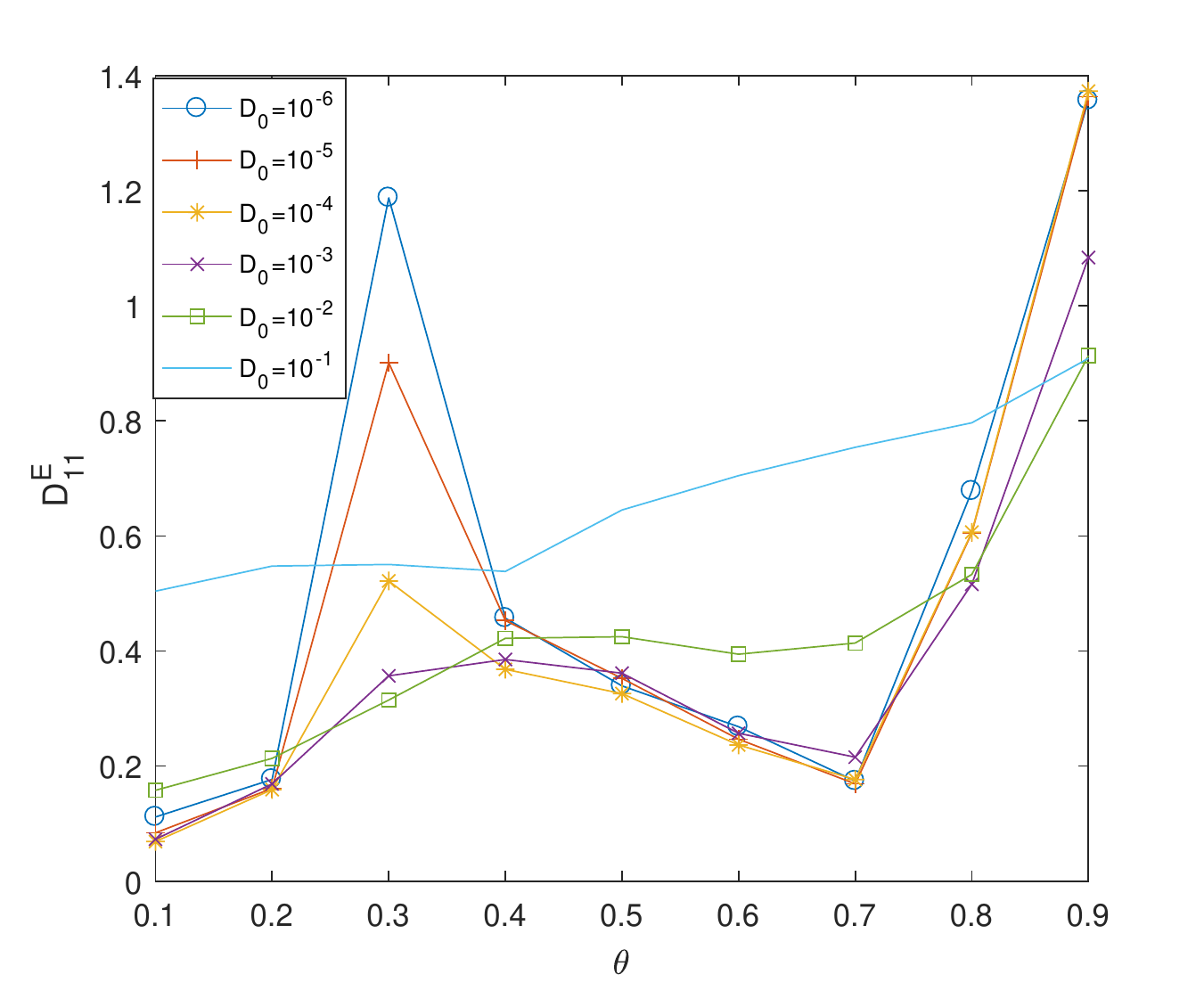}
\caption{The residual diffusivity results. $D^{E}_{11}$ vs. $\theta$ for the fully chaotic flow defined in Eq.\eqref{eqn:TimePeriodicCellularFlow}.}
\label{fig:eg2f2}
\end{figure}
\subsection{Investigating Stochastic flows}
\noindent
We are also interested in investigating the existence of the residual diffusivity for stochastic flows.
The homogenization of time-dependent random flows had been studied in literatures. Under certain integrability condition, it is proved that the effective diffusivity exists for the long-time large scale behavior of the solutions \cite{Fannjiang:97,Yaulandim:1998}. However, there are few numerical experiments to investigate effective diffusivity quantitatively
We shall use our symplectic splitting scheme to compute the effective diffusivity for stochastic flows.
More theoretical study will be reported in our subsequent paper.

The stochastic flow is constructed from the fully chaotic flow in Eq.\eqref{eqn:TimePeriodicCellularFlow}, where the time periodic function $\cos(t)$ is replaced by  an Ornstein-Uhlenbeck (OU) process $\eta_t$ \cite{Uhlenbeck:1930}. The OU process satisfies,
\begin{equation}\label{OUprocess}
d\eta_t=\theta_{ou}(\mu_{ou}-\eta_t)dt+\sigma_{ou}dW_t.
\end{equation}
where $\theta_{ou}>0$, $\mu_{ou}$, and $\sigma_{ou}>0$ are parameters and $dW_t$ denotes a Wiener process.
Specifically, $\theta_{ou}$ controls the speed of reversion, $\mu_{ou}$ is the long term mean level, and $\sigma_{ou}$ is the volatility or diffusion strength. In our numerical experiments, we choose $\mu_{ou}=0,\ \theta_{ou}=1$, and $\sigma_{ou}=1$, so
that the OU process has zero mean and the stationary variance is $\frac{\sigma_{ou}^2}{2\theta_{ou}}=\frac{1}{2}$. We choose the parameters in the OU process in such as way that its qualitative behavior is the same as $\cos(t)$. The particle path satisfies the following SDE,
\begin{equation}\label{StochasticCellularFlow}
\begin{cases}
dp=(\cos(q)+\theta\ \eta_t\sin(q))dt+\sigma dW^1_t\\
dq=(\cos(p)+\theta\ \eta_t\sin(p))dt+\sigma dW^2_t.
\end{cases}
\end{equation}
where the Brownian motions $dW^1_t$ and $dW^2_t$ are independent from the one used in the definition of the OU process Eq.\eqref{OUprocess}.

Since OU process has ergodic property, we choose a small amount of sample paths, say $n_{ou}=40$, and final computational time $T=5\times10^3$ to compute the effective diffusivity. In Tab.\ref{tab:OU}, we show the numerical results of $D^{E}_{11}$ for different $D_0$ and $\theta$, where each $D^E_{11}$ is the average values obtained from the $n_{ou}$ paths. In Fig.\ref{fig:eg2f4}, we show the results corresponding to Tab.\ref{tab:OU}. We observed a nonmonotone dependence of $D^E_{11}$ vs $\theta$ in time periodic cellular. Our numerical results again imply the existence of residual diffusivity for this type of stochastic flow. We observe however that the non-monotonic dependence in $\theta$ disappears. Namely, the residual diffusivity is an increasing function of $\theta$. Such phenomenon is due to the absence of resonance in stochastic flows.
Furthermore, we show the ergodicity results of the effective diffusivity in Fig.\ref{fig:eg2f5}. In this test, we
choose the parameters $\theta=0.1$, $D_0=10^{-2}$ and compute the effective diffusivity along $2000$ OU path. We show the histogram
of $D^E(\omega_{OU})$ at $T=100$, $T=200$, $T=500$, $T=5000$, and $T=20000$ respectively. This figure illustrates two facts:
firstly, as the computational time become long enough the histogram appears to converge to a limiting distribution. The limiting
distribution has much smaller variance and is centered closer to $0.156084$. Secondly, in the Tab.\ref{tab:Jack_symsp} we show
the residual diffusivity obtained from the fully chaotic (well-mixed) flow. When the parameters $\theta=0.1$, $D_0=10^{-2}$,
the corresponding  residual diffusivity is $D^{E}_{11}=0.157947$. Thus, the chaotic and stochastic flows may share some similar mechanism in long time behavior. More theoretic and numerical investigations will be studied in our future work.

\begin{table}[htbp]
	\centering
	\begin{tabular}{|c|c|c|c|c|c|c|}
\hline
		$\theta$ & $ D_0= 10^{-6} $ & $D_0= 10^{-5} $ &$ D_0= 10^{-4} $ & $ D_0=10^{-3} $ & $D_0= 10^{-2} $ & $ D_0= 10^{-1} $ \\
\hline
0.1   & 0.036442 & 0.037821 & 0.042649 & 0.064412 & 0.156084 & 0.485647 \\
0.2   & 0.070701 & 0.074095 & 0.075525 & 0.094416 & 0.172281 & 0.491868 \\
0.3   & 0.106238 & 0.104986 & 0.112149 & 0.123868 & 0.195421 & 0.496326 \\
0.4   & 0.137335 & 0.141704 & 0.145786 & 0.154876 & 0.221186 & 0.513384 \\
0.5   & 0.171326 & 0.173708 & 0.176357 & 0.187868 & 0.252861 & 0.522133 \\
0.6   & 0.197188 & 0.200511 & 0.205098 & 0.220810 & 0.272689 & 0.539465 \\
0.7   & 0.232775 & 0.231468 & 0.240672 & 0.248353 & 0.314599 & 0.563992 \\
0.8   & 0.259921 & 0.255478 & 0.268048 & 0.280238 & 0.332105 & 0.589805 \\
0.9   & 0.286707 & 0.291560 & 0.290207 & 0.294778 & 0.365502 & 0.605338 \\
\hline
	\end{tabular}%
\caption{Numerical results of $D^E_{11}$ by the symplectic splitting scheme. The flow is defined by OU process.}
\label{tab:OU}
\end{table}
\begin{figure}[htbp]
\centering
\includegraphics[width=0.75\linewidth]{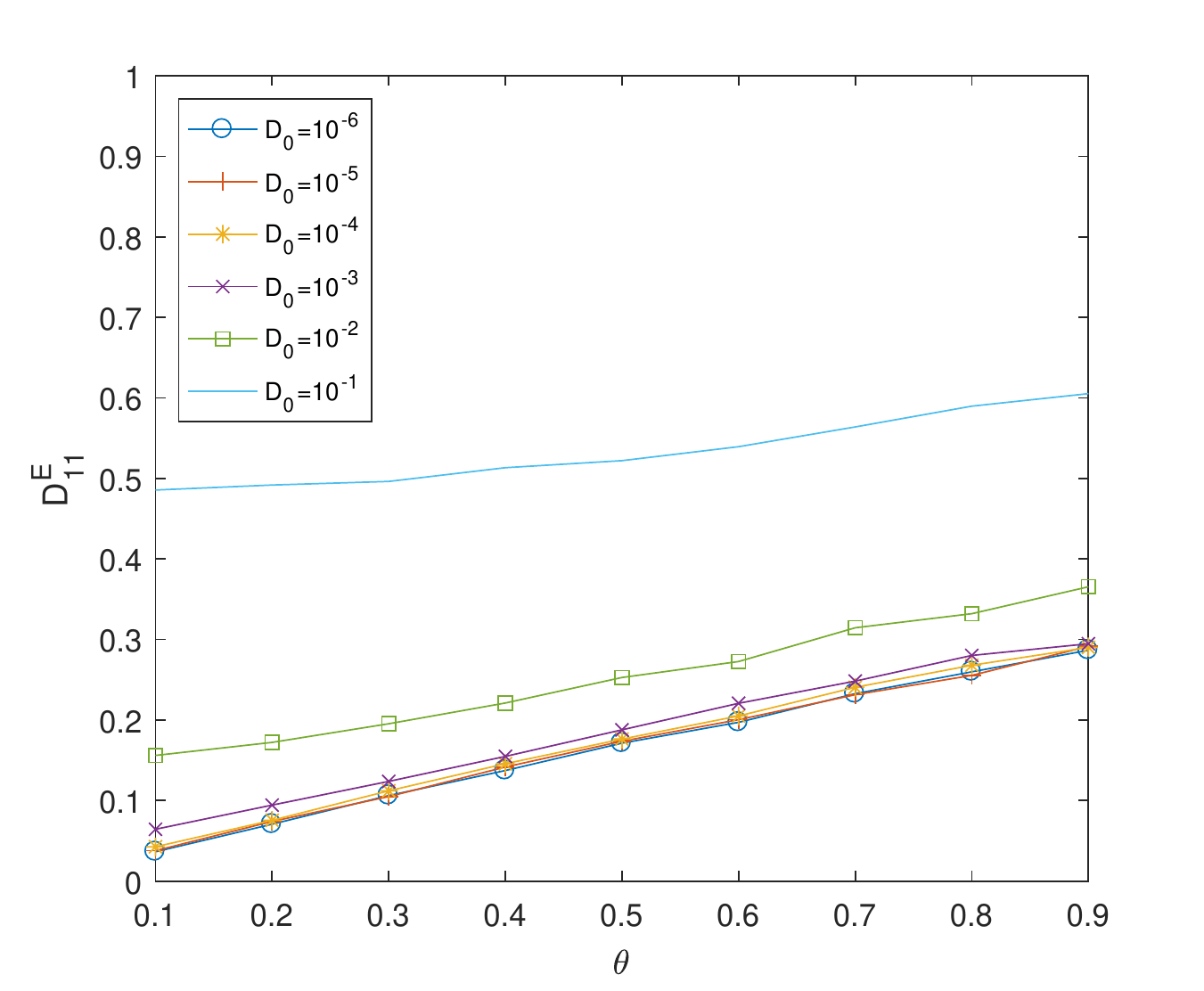}
\caption{The residual diffusivity results. $D^{E}_{11}$ vs. $\theta$ for the Stochastic flow driven by an OU process defined in Eq.\eqref{StochasticCellularFlow}.}
\label{fig:eg2f4}
\end{figure}

\begin{figure}[htbp]
\centering
\includegraphics[width=0.75\linewidth]{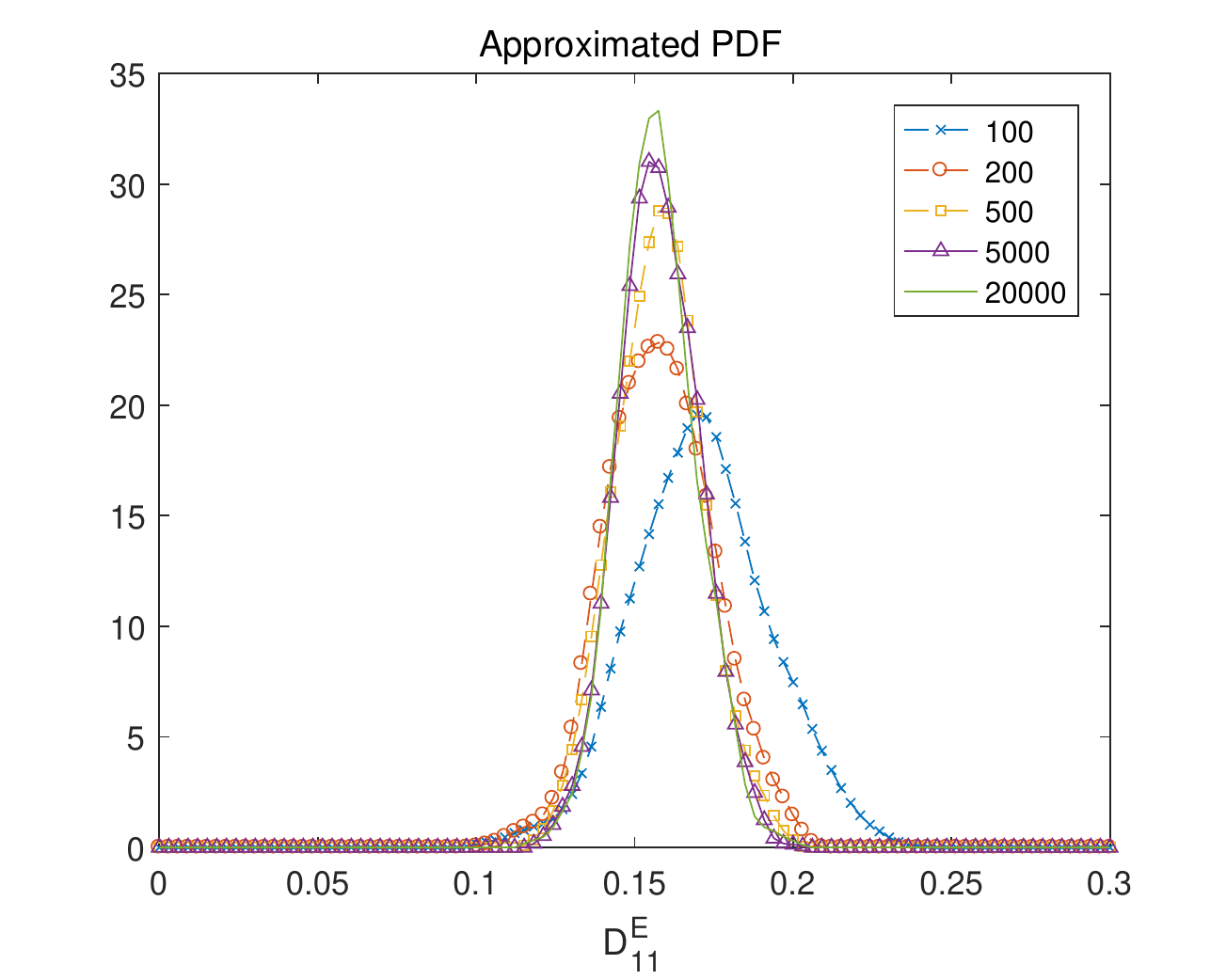}
\caption{Histogram of the residual diffusivity results. $D^{E}_{11}$ for the Stochastic flow driven by an OU process defined in Eq.\eqref{StochasticCellularFlow} that are computed at different final times.}
\label{fig:eg2f5}
\end{figure}

\subsection{Behavior of the long-time integration}\label{sec:LongTimeIntegration}
\noindent
Theorem \ref{thm:FollowAsymptoticHamiltonian} proves that the symplectic splitting scheme preserves the asymptotic Hamiltonian structure that enables us to compute the stable long-time behaviour of the effective diffusivity of chaotic and stochastic flows.  We now keep using the flow Eq.\eqref{eqn:TimePeriodicCellularFlow} and compute a much longer time solution with final time $T=5\times10^5$.

\begin{figure}[htbp]
	\centering
	\includegraphics[width=0.75\linewidth]{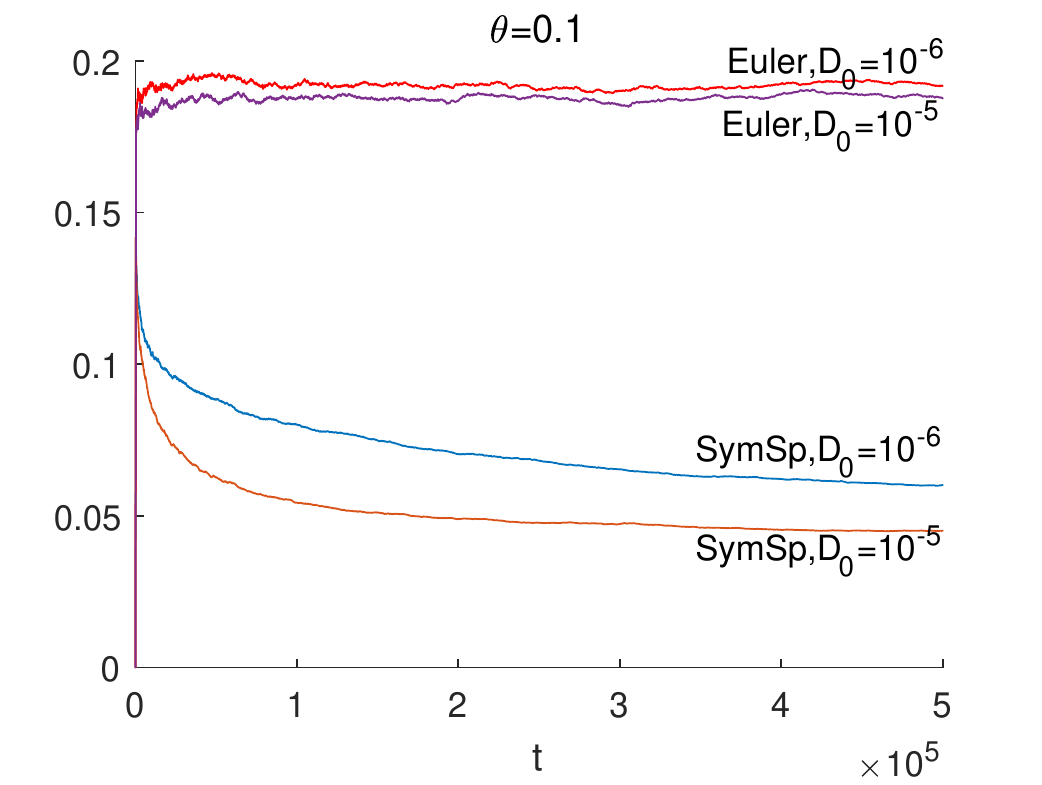}
	\caption{Behavior of $\frac{<(x_1(t)-x_1(0))^2>}{2t}$ as a function of time for two different methods. }
	\label{fig:eg2f1}
\end{figure}
In Figure \ref{fig:eg2f1}, we plot the effective diffusivity $D^E_{11}$ as a function of time obtained using different methods and parameters. The top two lines correspond to the Euler-Maruyama method for $\sigma=10^{-5}$ and $\sigma=10^{-6}$, while the bottom
two lines correspond to the symplectic splitting method. It is clear that results obtained from symplectic splitting method converge to a more stable value. A probable explanation is that modified flow of Euler method is not divergence-free while
the solution obtained using the symplectic splitting scheme follows an asymptotic Hamiltonian. This is proved in our Theorem \ref{thm:FollowAsymptoticHamiltonian}.

Another evidence comes from Figure \ref{fig:eg2f3}, where we plot the phase plane for two different numerical methods. The realization of the noise is the same and we integrated up to time $T=10^3$ with time step $\Delta t=10^{-2}$. We choose the parameters $\theta=0.1$ and $D_0=10^{-5}$. From these results, we find that the paths oscillate near a line with slope $1$. It is clear that the behavior of the particle is drastically different. In the case of Euler-Maruyama method the particle appears to be much more diffusive than in the case of the symplectic splitting scheme.

\begin{figure}[htbp]
	\centering
	\includegraphics[width=0.90\linewidth]{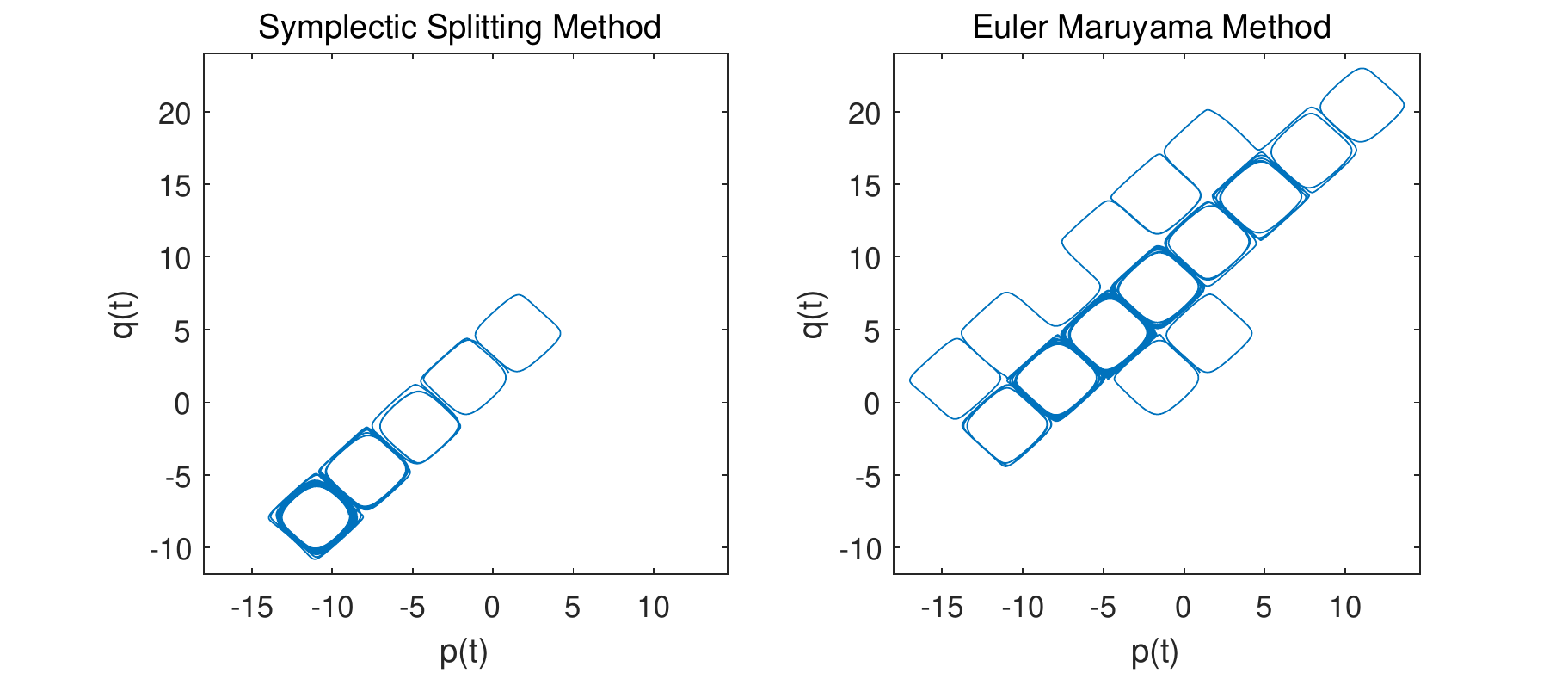}
	\caption{Phase plane for the two different methods.}
	\label{fig:eg2f3}
\end{figure}

In Figure \ref{fig:eg5f1}, we show how the modified equation approximate the original problem, where we consider the chaotic cellular flow \eqref{eqn:TimePeriodicCellularFlow}. More specifically, we plot the effective diffusivity $2(D^{E}_{11}+D^{E}_{22})$ as of function of time obtained using different methods and we choose the parameter $\theta=0.1$ and $D_0=10^{-5}$. From our numerical results, we find that the effective diffusivity obtained using our method with time step $dt=0.05$ agrees very well that one obtained from solving the modified equation using the Euler-Maruyama method with time step $dt=0.002$. Namely, we approximately achieve a $25X$ speedup over the  Euler-Maruyama method. The Euler method with $dt=0.05$ also generates results that agrees with its corresponding modified equation with finer time step. But the effective diffusivity converges to the wrong result.

\begin{figure}[htbp]
	\centering
	\includegraphics[width=0.75\linewidth]{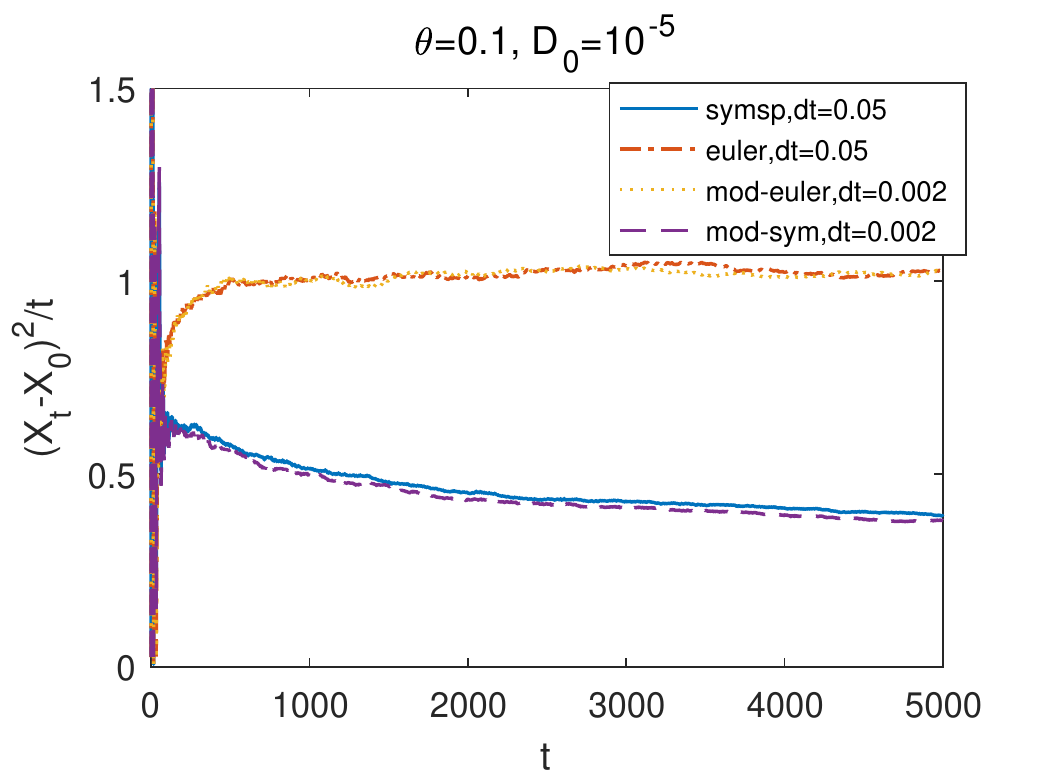}
	\caption{Behavior of $\frac{<(x_1(t)-x_1(0))^2>+<(x_2(t)-x_2(0))^2>}{t}$ for two different methods.}
	\label{fig:eg5f1}
\end{figure}

\section{Conclusions}
\noindent
Quantifying diffusion enhancement in fluid advection is a fundamental problem that has many applications in physical and engineering sciences. We proposed a class of structure preserving schemes that can efficiently compute the effective diffusivity of chaotic and stochastic flows containing complex streamlines.  In addition, we investigate the existence of the  residual diffusion phenomenon in chaotic and stochastic advection, which is an interesting problem by itself.  The effective diffusivity as well as the residual diffusivity can be computed by solving the Fokker-Planck equation in the Eulerian formulation. However, when the diffusion coefficient becomes small, the solutions of the advection-diffusion equation develop sharp gradients and thus demand a large amount of computational costs.

We compute the effective diffusivity in the Lagrangian formulation, i.e., solving SDEs.  We split the original problem into a deterministic sub-problem and a random perturbation, where the former is discretized using a symplectic preserving scheme while
the later is solved using the Euler-Maruyama scheme. We provide rigorous error analysis for our new numerical integrator using the backward error analysis technique and show that our method outperform standard Euler-based integrators. Numerical results are presented to demonstrate the accuracy and efficiency of the proposed method for several typical chaotic and stochastic flows problems of physical interests. We find that the residual diffusivity exists in some time periodic and stochastic cellular flows.

\section{Acknowledgements}
\noindent
The research of Z. Wang is partial supported by the Hong Kong PhD Fellowship Scheme.
The research of J. Xin is supported by NSF grants DMS-1211179, DMS-1522383. The research of Z. Zhang is supported by Hong Kong RGC
grants (Project 27300616 and 17300817), National Natural Science Foundation of China (Project 11601457), Seed Funding Programme for Basic Research (HKU), and the Hung Hing Ying Physical Sciences Research Fund (HKU).

\section*{References}
\bibliographystyle{plain}

\end{document}